\numberwithin{equation}{section}
\newcommand{\ZZ}{\mathbb{Z}}
\newcommand{\NN}{\mathbb{N}}
\newcommand{\CC}{\mathbb{C}}
\newcommand{\RR}{\mathbb{R}}
\newcommand{\FF}{\mathbb{F}}
\newcommand{\PP}{\mathbb{P}}
\newcommand{\CA}{\mathcal{A}}
\newcommand{\CT}{\mathcal{T}}
\newcommand{\CL}{\mathcal{L}}
\newcommand{\CP}{\mathcal{P}}
\newcommand{\CI}{\mathcal{I}}
\newcommand{\CQ}{\mathcal{Q}}
\definecolor{blue-violet}{rgb}{0.54, 0.17, 0.89}
\newtheoremstyle{dotless}{}{}{\itshape}{}{\bfseries}{}{ }{}
\theoremstyle{dotless}
\newtheorem{theorem}{Theorem}[section]
\newtheorem{lemma}[theorem]{Lemma}
\newtheorem{proposition}[theorem]{Proposition}
\theoremstyle{remark}
\newtheorem*{remark}{Remark}
\theoremstyle{definition}
\newtheorem{definition}{Definition}[section]
\title{Point-Line Incidence Estimates in $(\ZZ/p^k\ZZ)^2$}
\author{Yuhan Chu}
\address{University of British Columbia, Vancouver, BC, Canada}
\email{ychu12@student.ubc.ca}
\date{\today}
\begin{document}
\maketitle

\begin{abstract}
    The point-line incidence problem has been widely studied in Euclidean spaces and vector spaces over finite fields, whereas the analogous problem has rarely been considered over finite $p$-adic rings. In this paper, we investigate incidences in the $p$-adic setting and prove new incidence bounds for points and lines in $(\ZZ/p^k\ZZ)^2$. Our first two results extend previously known incidence bounds over finite fields, assuming lines are well-separated. For non-separated lines, we establish a general incidence result for weighted points and lines under certain dimensional spacing conditions using the Fourier analytic method and the induction-on-scales argument.
\end{abstract}

\smallskip

\section{Introduction}
Let $\CP$ be a finite set of points and $\CL$ a finite set of lines. We define the set of incidences between $\CP$ and $\CL$ by 
\begin{equation*}
    \CI(\CP,\CL) = \{(q,l)\in \CP\times \CL: q\in l\}.
\end{equation*}
This leads to a natural question of the maximal number of incidences. For points and lines in the Euclidean plane $\RR^2$, Szemer\'{e}di and Trotter \cite{SzTr} proved the classical result:
\begin{equation}\label{SzTrE}
    |\CI(\CP,\CL)| \lesssim |\CP|^{\frac{2}{3}}|\CL|^{\frac{2}{3}} + |\CP| + |\CL|,
\end{equation}
where the notation $X\lesssim Y$ means $X \leq CY$ for some constant $C > 0$. In particular, (\ref{SzTrE}) is sharp up to a constant due to a construction by Elekes \cite{Elekes}.\\
The Szemer\'{e}di-Trotter theorem is considered a fundamental result in discrete geometry and extremal combinatorics. In addition to Szemer\'{e}di and Trotter's original proof, the bound (\ref{SzTrE}) can be proved in various methods. Most famously, Sz\'{e}leky \cite{Szekely} pointed out a beautiful argument to prove (\ref{SzTrE}) via the crossing number inequality. Cell decomposition is another classical method for addressing incidence problems, first introduced by Clarkson, Edelsbrunner, Guibas, Sharir, and Welzl in \cite{CEGSW90}. More recently, this method can be achieved in a more practical way to prove (\ref{SzTrE}) through Guth and Katz's polynomial partitioning argument \cite{GK15}. All these methods rely on some special geometric or topological properties in $\RR^2$, and therefore are difficult to apply to other settings.\\
Over finite prime fields $\FF_p$, a direct example consisting of all points and all non-vertical lines in $\FF_p^2$ shows that the Szemer\'{e}di-Trotter theorem does not hold generally in finite fields. Consequently, additional restrictions are required for nontrivial results. Bourgain, Katz, and Tao \cite{BKT04} were the first to establish a nontrivial incidence bound in $\FF_p^2$. They proved that if $|\CP|,|\CL| \leq N = p^{\alpha}$ for some $\alpha \in (0,2)$, then 
\begin{equation*}
    |\CI(\CP,\CL)| \lesssim N^{\frac{3}{2}-\varepsilon}
\end{equation*}
for some $\varepsilon = \varepsilon(\alpha)$ only depending on $\alpha$. Later, Helfgott and Rudnev \cite{HR11} were able to give an explicit value $\varepsilon = 1/10678$ for Bourgain-Katz-Tao's result, and the value of $\varepsilon$ was further improved by Jones in \cite{Jones16} and \cite{Jones12} to $\varepsilon = 1/662$.\\
In the finite field setting, some extreme cases have been relatively well-studied. For large sets of points and lines, say $|\CP|,|\CL| > p$, Vinh \cite{Vinh11} obtained the following bound via tools from spectral graph theory:
\begin{equation*}
    \left|\CI(\CP,\CL) - \frac{|\CP||\CL|}{p}\right| \leq p^{\frac{1}{2}}|\CP|^{\frac{1}{2}}|\CL|^{\frac{1}{2}}.
\end{equation*}
For small sets of points, Grosu \cite{Grosu14} proved that for $|\CP| \sim \log\log\log p$, we could attain an incidence bound $|\CI(\CP,\CL)| \lesssim |\CP|^{\frac{4}{3}}$ close to the classical Szemer\'{e}di-Trotter theorem. For non-extreme cases, Stevens and de Zeeuw proved that if $|\CP|^{\frac{7}{8}} \lesssim |\CL| \lesssim |\CP|^{\frac{8}{7}}$ and $|\CP|^{-2}|\CL|^{13} \lesssim p^{15}$, we have the incidence bound 
\begin{equation*}
    |\CI(\CP,\CL)| \lesssim |\CP|^{\frac{11}{15}}|\CL|^{\frac{11}{15}}.
\end{equation*}
This result, when $|\CP| = |\CL| = N$, is better than the result in \cite{Vinh11} for $N \lesssim p^{\frac{15}{14}}$, and improves $\varepsilon$ in \cite{Jones12} from $1/662$ to $1/30$. More general non-extreme cases have not yet been fully understood in the literature.\\
The main focus of this paper is to study the point-line incidence problem in the $p$-adic setting, the incidences between points and lines in modules over finite rings $R_k \coloneqq \ZZ/p^k\ZZ$. Finite $p$-adic rings serve as an intermediate state between finite fields and Euclidean spaces. The rings $R_k$ are still discrete, as finite fields, but they also admit multiple scales similar to the Euclidean setting. However, $p$-adic analogues of incidence problems are rarely studied in the literature. The existing results in the setting of the finite cyclic ring $\ZZ/m\ZZ$, all proved using Erd\H{o}s-R\'{e}nyi graphs in the same spirit as the graph-theoretical method in \cite{Vinh11}, tend to be mostly effective when $m$ has only a few prime divisors, and when $\CP$ contain almost all points and $\CL$ contains almost all lines in $(\ZZ/m\ZZ)^2$. In \cite{TV15}, Thang and Vinh obtained the following Szemer\'{e}di-Trotter type theorem:
\begin{equation}\label{TVbd}
    \left|\CI(\CP,\CL) - \frac{|\CP||\CL|}{m} \right| \leq \frac{2\tau(m)m^2}{\phi(m)\gamma(m)^{\frac{1}{2}}}\sqrt{|\CP||\CL|},
\end{equation}
where $\gamma(m)$ is the smallest prime divisor of $m$, $\tau(m)$ is the number of divisors of $m$, and $\phi(m)$ is Euler's totient function. When $m = p^r$ and $|\CP|,|\CL| \lesssim N = p^{\alpha}$ with $2r-1+\varepsilon \leq\alpha \leq 2r-\varepsilon$, (\ref{TVbd}) becomes a reasonably good estimate 
\begin{equation*}
    |\CI(\CP,\CL)| \lesssim N^{\frac{3}{2}-\frac{\varepsilon}{4r}}.
\end{equation*}
Vinh also provided in \cite{Vinh14} a probabilistic characterization of the existence of incidences in $(\ZZ/m\ZZ)^2$ alongside their major result for finite fields. Moreover, Pham and Vinh gave in \cite{PV17} an incidence estimate essentially equivalent to (\ref{TVbd}) for $m = p^r$ as an intermediate step towards their estimates on the number of triangle areas.\\
In this paper, we investigate the incidences for points and lines in $(\ZZ/p^k\ZZ)^2$ under several different conditions that have not been considered so far in this setting. Our first two results are for well-separated lines in $(\ZZ/p^k\ZZ)^2$, which are extensions of results in \cite{BKT04} and \cite{SZ17} for finite fields. The definition of the separation condition on lines will be given later in this section.

\begin{theorem}\label{BKTanalogueThm}
    Let $\CP$ be a set of points of $R_k^2$ and $\CL$ a set of lines in $R_k^2$ that are either $1$-separated in direction or $1$-separated in distance. Suppose $|\CP|,|\CL| \leq N = p^{\alpha}$ for some $0 < \alpha < 2$. Then 
    \begin{equation*}
        |\CI(\CP,\CL)| \lesssim N^{\frac{3}{2}-\varepsilon},
    \end{equation*}
    where $\varepsilon = \varepsilon(\alpha) > 0$ depends only on $\alpha$.
\end{theorem}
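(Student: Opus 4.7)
The plan is to reduce the problem to the Bourgain-Katz-Tao bound over $\FF_p^2$ via the coordinate-wise projection $\pi: R_k^2 \to \FF_p^2$ induced by the quotient $R_k \twoheadrightarrow R_k/pR_k \cong \FF_p$. The purpose of the $1$-separation hypothesis---whether in direction or in distance---is precisely that it guarantees distinct lines of $\CL$ project to distinct lines of $\FF_p^2$, so $\bar{\CL} := \pi(\CL)$ is a set of $|\CL| \leq N$ lines in $\FF_p^2$. Since $q \in l$ forces $\pi(q) \in \pi(l)$, every incidence in $\CI(\CP, \CL)$ descends to one between the multi-set $\bar{\CP} := \pi(\CP)$ and $\bar{\CL}$.

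The next step is to absorb the point multiplicities $m(q') := |\pi^{-1}(q') \cap \CP|$ by a dyadic pigeonhole. Setting $\bar{\CP}_j := \{q' \in \pi(\CP) : 2^{j-1} < m(q') \leq 2^j\}$ for $j = 0, 1, \dots, O(\log N)$, one has $|\bar{\CP}_j| \lesssim N \cdot 2^{-j}$ and
\begin{equation*}
    |\CI(\CP, \CL)| \leq \sum_{j} 2^j \, |\CI(\bar{\CP}_j, \bar{\CL})|.
\end{equation*}
Applying BKT in $\FF_p^2$ to each dyadic piece gives $|\CI(\bar{\CP}_j, \bar{\CL})| \lesssim M_j^{3/2 - \varepsilon_0}$ with $M_j := \max(|\bar{\CP}_j|, |\bar{\CL}|) \leq p^{\alpha}$ and $\varepsilon_0 = \varepsilon_0(\alpha) > 0$. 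On the dyadic range where $|\bar{\CP}_j|$ dominates $|\bar{\CL}|$, the sum over $j$ forms a geometric series controlled by $N^{3/2 - \varepsilon_0}$.

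The main obstacle appears in the opposite dyadic range, where $2^j$ is large and many points of $\CP$ concentrate in a few fibers $\pi^{-1}(q')$. Each such fiber is an affine copy of $(pR_k)^2 \cong R_{k-1}^2$, and the restriction of any $l \in \CL$ to it is either empty or again a line in $R_{k-1}^2$. This structure opens the door to an induction on $k$, provided the restricted line configuration can be shown to inherit a separation condition (otherwise a small number of offending lines must be discarded at controlled cost). The delicate point is that the target $\varepsilon$ depends only on $\alpha$, not on $k$, so the losses from the $O(k)$ inductive steps must be absorbed into the outer geometric series rather than compounded multiplicatively; this bookkeeping is where the bulk of the work lies.
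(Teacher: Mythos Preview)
Your observation that $\pi:R_k^2\to\FF_p^2$ is injective on $\CL$ under the separation hypothesis is correct, and the dyadic inequality $|\CI(\CP,\CL)|\le\sum_j 2^j\,|\CI(\bar\CP_j,\bar\CL)|$ is valid. The gap is that this inequality is far too lossy: it bounds $|l\cap\pi^{-1}(q')\cap\CP|$ by the full fiber multiplicity $m(q')$, discarding all information about how $\CP$ sits along $l$ inside the fiber. When $|\bar\CL|=|\CL|\sim N$ (which is allowed), every level has $M_j\sim N$, BKT gives only $|\CI(\bar\CP_j,\bar\CL)|\lesssim N^{3/2-\varepsilon_0}$, and the weight $2^j$ (up to $N$) produces a useless $N^{5/2-\varepsilon_0}$. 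Your fallback of inducting on $k$ inside fibers does not close either. Lines through a common fiber are indeed pairwise $1$-separated in direction (the distance-separated pairs project to disjoint parallels), so $|\CL_{q'}|\le p+1$ and the restricted configuration inherits separation; but summing the fiberwise Cauchy--Schwarz bound $|\CP_{q'}|\,|\CL_{q'}|^{1/2}+|\CL_{q'}|$ over $q'\in\bar\CP$ yields $p^{1/2}N+|\CI(\bar\CP,\bar\CL)|$, and the first term beats $N^{3/2-\varepsilon}$ precisely when $\alpha\le 1$. Replacing Cauchy--Schwarz by the inductive hypothesis does not help, because the sizes $|\CP_{q'}|$ are heterogeneous across fibers and there is no convexity to exploit. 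What you call ``bookkeeping'' is the entire difficulty, and for $\alpha\le 1$ the scheme appears to be genuinely stuck at $N^{3/2}$.

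The paper proceeds in the opposite order: it runs the BKT popularity/refinement argument \emph{directly in $R_k^2$}, which is legitimate because the separation hypothesis already gives the Euclidean axiom (two lines of $\CL$ meet in at most one point). This produces two anchor points $q_0,q_1$ and a grid $\tilde\CP$ of size $\gtrsim N^{1-C\varepsilon}$ covered by intersections of lines through $q_0$ with lines through $q_1$. The key geometric input, Lemma~\ref{sepintersec}, then says that if $\|q_0-q_1\|=p^{-j}$ the entire grid $\tilde\CP$ lies in a single $p^{-j}$-cube $Q_j$. One rescales $Q_j$ to $R_{k-j}^2$ (bringing $q_0,q_1$ to $p$-adic distance $1$) and projects once to $\FF_p^2$; both maps are injective on $\tilde\CP$ and on the relevant lines, so the grid and its incidence count survive intact, and the argument finishes with the usual projective transformation and sum-product step over $\FF_p$. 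No induction on $k$ and no multiplicity accounting are needed.
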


\begin{theorem}\label{SZp-adicThm}
    Let $\CP$ be a set of $m$ points in $R_k^2$ and $\CL$ a set of $n$ lines in $R_k^2$ such that the lines in $\CL$ are either $1$-separated in direction or $1$-separated in distance. Suppose that $n^{\frac{7}{8}} < m < n^{\frac{8}{7}}$ and $m^{-2}n^{13} \lesssim p^{15}$. Then 
    \begin{equation*}
        |\CI(\CP,\CL)| \lesssim |\CP|^{\frac{11}{15}}|\CL|^{\frac{11}{15}}.
    \end{equation*}
\end{theorem}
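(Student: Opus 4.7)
The natural strategy is to adapt the Stevens--de Zeeuw argument in \cite{SZ17} to the $p$-adic setting. Their proof over $\FF_p$ rests on two pillars: a Cauchy--Schwarz / point-pairing reduction from point-line incidences in the plane to point-plane incidences in three-space, and Rudnev's point-plane incidence theorem in $\FF_p^3$. My plan is to establish analogues of both steps in $R_k$.

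I would first develop a point-plane incidence theorem in $R_k^3$ of Rudnev type: for a set $\CQ \subset R_k^3$ and a collection $\Pi$ of planes in $R_k^3$ with $|\Pi| \leq |\CQ| \leq p^{2k}$, and such that no line in $R_k^3$ contains more than $M$ points of $\CQ$ (interpreted in a $p$-adically appropriate sense), one should obtain
\[
    |\CI(\CQ,\Pi)| \lesssim \frac{|\CQ||\Pi|}{p^k} + |\CQ|^{1/2}|\Pi| + M|\Pi|.
\]
Rudnev's argument over $\FF_p$ uses the Klein quadric and intersection theory, which do not transport verbatim to $R_k$ because of zero divisors. I would stratify the planes by the $p$-adic valuation of their defining normal vector: planes with a unit normal component reduce modulo $p$ and inherit Rudnev's bound over $\FF_p$, while the remaining planes are handled by an induction on $k$ after factoring a common power of $p$ out of the defining equation.

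With such a bound in hand, I would carry out the Stevens--de Zeeuw reduction: dyadically decompose $\CP$ into subsets of comparable richness $r(q) = |\{\ell \in \CL : q \in \ell\}|$, reducing matters to controlling the size of a set $\CP^*$ of $k$-rich points. For $\CP^*$ one counts ordered triples $(q,\ell_1,\ell_2)$ with $q \in \ell_1 \cap \ell_2$ and $\ell_1 \neq \ell_2$. Parametrizing each line $\ell \in \CL$ by its slope--intercept pair $(a,b) \in R_k^2$, such a triple encodes an incidence between a point in $R_k^3$ built from $(a_1,b_1,a_2)$ and a plane in $R_k^3$ determined by $q$ and $b_2$. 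Applying the Rudnev-type bound and optimizing over the dyadic parameter $k$ then yields the claimed exponent $11/15$ under the hypotheses $n^{7/8} \lesssim m \lesssim n^{8/7}$ and $m^{-2}n^{13} \lesssim p^{15}$.

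The main obstacle is the $p$-adic Rudnev theorem, and especially controlling the parameter $M$ after the encoding. The $1$-separation hypothesis on $\CL$ ensures that the lines in $\CL$ remain distinguishable at all $p$-adic scales, which is exactly what is needed to prevent many of the encoded points in $R_k^3$ from lying on a common line and thereby inflating $M$. Verifying that the induction-on-$k$ step in the $p$-adic Rudnev theorem closes cleanly---so that the residual incidences produced by descending to $R_{k-1}$ do not swamp the main term under the constraint $m^{-2}n^{13} \lesssim p^{15}$---will be the most delicate part of the calculation.
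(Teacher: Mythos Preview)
Your proposal takes a genuinely different route from the paper, and the comparison is instructive.

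The paper does \emph{not} attempt to prove a Rudnev-type point--plane theorem over $R_k$. Instead, it runs the Stevens--de Zeeuw popularity/grid argument directly in $R_k^2$ (which is legitimate because the separation hypothesis restores the Euclidean axiom, so Lemma~\ref{Cartprod} and Lemma~\ref{CSbd} hold verbatim), obtains the two popular points $q_{1,i},q_{2,i}$ and the grid-like set $G_i$, and then uses the geometric Lemma~\ref{sepintersec} to reduce each $G_i$ to a configuration in $\FF_p^2$. Concretely: if $\|q_{1,i}-q_{2,i}\|=p^{-j}$, Lemma~\ref{sepintersec} forces $G_i$ to live in a single $p^{-j}$-cube $Q_j$; rescaling by $\iota_{Q_j}$ sends everything to $R_{k-j}^2$ with the two popular points now at distance $1$; a further projection $\pi_1$ lands in $\FF_p^2$ and is injective on $G_i$ and on the relevant lines. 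At that point the paper simply invokes Theorem~4 of \cite{SZ17} (the Cartesian-product incidence bound over $\FF_p$) as a black box and closes the induction exactly as in \cite{SZ17}. The $p$-adic work is confined to the short Lemma~\ref{sepintersec} and the bookkeeping that rescaling and projection preserve the incidence count.

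Your plan, by contrast, asks for a full $p$-adic Rudnev theorem in $R_k^3$. This is a real gap, not a routine extension: Rudnev's argument goes through the Guth--Katz line-incidence bound and the Klein quadric, and the algebraic geometry there genuinely uses that the ground ring is a field. Your suggested fix---stratify normals by valuation and induct on $k$---is plausible-sounding but far from a proof; in particular it is not clear that the error terms generated at each inductive step stay dominated by the main term, nor that the collinearity parameter $M$ behaves well after the slope--intercept encoding when slopes can be non-units in $R_k$. The paper's approach sidesteps all of this: by reducing to $\FF_p^2$ \emph{after} extracting the grid, it never needs any three-dimensional incidence theory over $R_k$ at all.
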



The proofs of Theorem \ref{BKTanalogueThm} and \ref{SZp-adicThm} closely follow the arguments in the aforementioned papers with some modifications. The typical arguments for Szemer\'{e}di-Trotter type results in finite fields proceed as follows. We first refine the sets of points and lines through iterative applications of the pigeonhole principle and find two popular points such that the intersections of lines through these points capture the majority of incidences, which we call the grid-like structure. Then we conduct a projective transformation which transforms the grid-like structure into a Cartesian product configuration by mapping the two selected points to points at infinity, and exploit the additive and multiplicative information to approach a contradiction. These arguments depend heavily on the validity of projective transformations that map arbitrary two distinct points to points at infinity, which unfortunately does not hold generally in the $p$-adic setting due to the existence of nontrivial zero divisors. Although projective transformations in $R_k^2$ fail to work effectively, after rescaling and projection, every such grid-like structure can be reduced to configurations in $(\ZZ/p\ZZ)^2$ without losing the information of incidences captured by the structure. This modification allows us to continue with the usual projective transformations in finite fields and arrive at the same conclusion.\\ 
We are also interested in incidences for non-separated lines that may intersect at more than one point. This appears with more generality in the $p$-adic setting, in which case the multiscale nature of $p$-adic rings aligns better with the multiple scales in the Euclidean setting. In particular, we are able to follow the induction-on-scales argument frequently applied to study incidences for tubes and the Furstenberg set problem in the Euclidean plane.\\
However, unlike the Euclidean spaces, there are only finitely many scales in spaces over $p$-adic rings, leading to the fact that distinct cubes or tubes may become identical after thickening to lower scales. To overcome this obstruction, we introduce sets of weighted points and lines and consider the generalized incidence, which is the sum of the products of the weights of incident points and lines, by virtue of a general incidence result in \cite{Bradshaw23} for weighted atoms and tubes in the Euclidean spaces. Similar weighted incidences are also investigated later in \cite{CPZ23} for weighted points and lines, and in \cite{RW23} for weighted dyadic cubes and tubes.\\
We provide a generalized incidence estimate for weighted points and lines in $R_k^2$ obeying certain dimensional spacing conditions. A set of weighted points or lines is called a $(p^{-j},\alpha,K_{\alpha})$-set if the points or lines are at scale $p^{-j}$ (so they are identified with $p^{-j}$-cubes or tubes in $R_k^2$), and the set resembles an $\alpha$-dimensional subset with respect to any lower scale $p^{-\ell}$, $\ell \leq j$, with $K_{\alpha}$ being some constant indicating the maximal size. We defer the rigorous definitions to Section 3. The main result on the generalized incidence under the dimensional spacing conditions is as follows.
\begin{theorem}\label{weighted}
    Fix $\varepsilon \in (0,1)$ and $0\leq \alpha,\beta \leq 2$. Let $c^{-1} = \max(\alpha+\beta-1,2)$. Then there exists a constant $C(p,\varepsilon) > 0$ such that for any $(p^{-k},\alpha,K_{\alpha})$-set of points $\CP$ and $(p^{-k},\beta,K_{\beta})$-set of lines $\CL$,
    \begin{equation*}
        I_w(\CP,\CL) \leq C(p,\varepsilon)p^{k(c+\varepsilon)}(K_{\alpha}K_{\beta})^c|\CP|_w^{1-c}|\CL|_w^{1-c},
    \end{equation*}
    where the constant $C(p,\varepsilon)$ is chosen so that 
    \begin{equation*}
        C(p,\varepsilon) > \max\{p^{1-\varepsilon}, (p^{\varepsilon}- 1)^{-1}\}.
    \end{equation*}
    In particular, if $\CP$ and $\CL$ are sets of unweighted points and lines, we have the incidence estimate:
    \begin{equation*}
        I(\CP,\CL) \leq C(p,\varepsilon)p^{k(c+\varepsilon)}(K_{\alpha}K_{\beta})^c|\CP|^{1-c}|\CL|^{1-c}.
    \end{equation*}
\end{theorem}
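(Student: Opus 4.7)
The plan is to combine a Fourier-analytic single-scale bound on the finite abelian group $R_k^2$ with a Bradshaw-style induction-on-scales argument, with the induction proceeding along the natural tower of projections $R_k \twoheadrightarrow R_{k-1} \twoheadrightarrow \cdots \twoheadrightarrow R_1 = \FF_p$. The base case is $k=1$, where a $(p^{-1},\alpha,K_\alpha)$-set is essentially a weighted subset of $\FF_p^2$ and the claim reduces to a Vinh/spectral estimate over $\FF_p^2$.

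For the Fourier input at a single scale, I would expand $I_w(\CP,\CL) = \sum_{q,l} f(q)g(l)\mathbf{1}_{q\in l}$, express $\mathbf{1}_{q\in l}$ in terms of characters of $R_k^2$, and apply Plancherel. This yields a bound essentially of the form
\[
I_w(\CP,\CL) \;\lesssim\; p^{-k}|\CP|_w|\CL|_w \,+\, p^{k/2}\bigl(|\CP|_w|\CL|_w\bigr)^{1/2},
\]
whose first term is the expected incidence count and whose second term is a Cauchy--Schwarz remainder over non-trivial characters. In the regime $\alpha+\beta-1 \leq 2$, where $c=1/2$, combining this estimate with the dimensional caps forced by the $(p^{-k},\alpha,K_\alpha)$ and $(p^{-k},\beta,K_\beta)$ conditions already delivers the theorem.

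For the harder regime $\alpha+\beta-1 > 2$, so $c < 1/2$, I would induct on $k$. Choose an intermediate scale $\ell \in \{1,\ldots,k-1\}$ and project $\CP,\CL$ to $R_\ell^2$, summing weights along the fibers of the quotient $R_k^2 \twoheadrightarrow R_\ell^2$. The pushforwards should again be a $(p^{-\ell},\alpha,K_\alpha)$- and a $(p^{-\ell},\beta,K_\beta)$-set, so the inductive hypothesis at scale $\ell$ controls the ``coarse'' incidences. The residual incidences inside each fiber, after the affine rescaling $x \mapsto p^{\ell}x$, become an instance of the same problem in $R_{k-\ell}^2$, to which the inductive hypothesis applies again at reduced scale $k-\ell$. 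Balancing and optimizing over $\ell$ then produces the exponent $k(c+\varepsilon)$; the explicit constant $C(p,\varepsilon) > \max\{p^{1-\varepsilon},(p^\varepsilon-1)^{-1}\}$ tracks the two loss mechanisms, namely a single-scale Fourier factor of $p$ absorbed by $p^{\varepsilon}$, and the cumulative geometric-series loss $\sum_{j\geq 0} p^{-j\varepsilon} \leq p^{\varepsilon}(p^{\varepsilon}-1)^{-1}$ across the recursion.

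The hard part will be verifying that both the projected data and the rescaled fiber data continue to satisfy the appropriate dimensional spacing condition with the \emph{same} constants $K_\alpha,K_\beta$, so that the induction genuinely closes. A subtlety specific to the $p$-adic setting, already noted for Theorems~\ref{BKTanalogueThm} and~\ref{SZp-adicThm}, is that distinct lines in $R_k^2$ can coalesce after projection to a coarser ring $R_\ell^2$; this is precisely why the weighted formulation is essential, and why the unweighted corollary only falls out after the weighted estimate is in hand. Propagating the dimensional data through the projection and rescaling operations without inflating $K_\alpha$ or $K_\beta$ at each recursion level, and matching this carefully against the geometric series that feeds the $\varepsilon$-loss, is the principal technical hurdle.
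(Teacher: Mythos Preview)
Your overall framework---Fourier analysis on $R_k^2$ plus induction down the tower $R_k\to R_{k-1}\to\cdots$, with weights to absorb collapsing lines---matches the paper's, but two of the concrete claims you rely on are false, and correcting them changes the shape of the induction.

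The single-scale bound $I_w(\CP,\CL)\lesssim p^{-k}|\CP|_w|\CL|_w+p^{k/2}(|\CP|_w|\CL|_w)^{1/2}$ fails in $R_k^2$ once $k\geq 2$. Take a $p^{-(k-1)}$-tube $T$, let $\CP$ be its $p^{k+1}$ points and $\CL$ its $p^{2}$ lines; then $I(\CP,\CL)=p^{k+2}$ while your right-hand side is only $p^{3}+p^{k+3/2}$. Non-separated lines in $R_k^2$ can share up to $p^{k-1}$ points, so a Vinh-style spectral bound over $R_k^2$ necessarily carries a much larger error (compare~(\ref{TVbd})). What the high-low method actually delivers (Proposition~\ref{inductlemma}, cutting frequency at the $p^{-1}$-cube about the origin) is
\[
I_w(\CP,\CL)\;\leq\;p^{k/2}(K_\alpha K_\beta)^{1/2}|\CP|_w^{1/2}|\CL|_w^{1/2}\;+\;p^{-1}I_w(\CP_{k-1},\CL_{k-1}),
\]
and the low-frequency piece \emph{is} the coarsened incidence count. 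There are no separate ``residual incidences inside each fiber'' to treat and no second application of the hypothesis at scale $k-\ell$; the entire inductive step is to bound $I_w(\CP_{k-1},\CL_{k-1})$ by the hypothesis at scale $k-1$. One recursion per step, with no case split on $\alpha+\beta$.

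Your claim that the pushforward remains a $(p^{-\ell},\alpha,K_\alpha)$-set with the \emph{same} constant is also false: summing weights over fibers forces $K_\alpha\mapsto p^{\alpha}K_\alpha$ and $K_\beta\mapsto p^{\beta}K_\beta$ after a single step. This is not a hurdle to be removed but the mechanism that closes the induction. Applying the hypothesis at scale $k-1$ with the inflated constants contributes $(p^{\alpha+\beta}K_\alpha K_\beta)^{c}$, and the extra $p^{c(\alpha+\beta)}$ is cancelled against the prefactor $p^{-1}$ and the drop from $p^{k(c+\varepsilon)}$ to $p^{(k-1)(c+\varepsilon)}$, using only $c(\alpha+\beta-1)\leq 1$. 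The condition $C(p,\varepsilon)>(p^{\varepsilon}-1)^{-1}$ then absorbs the additive high-frequency term into the resulting geometric series, while $C(p,\varepsilon)>p^{1-\varepsilon}$ handles the base case $k=1$ via the trivial bound $I_w\leq|\CP|_w|\CL|_w$ rather than any spectral estimate.
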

Theorem \ref{weighted} is proved via the $p$-adic analogue of the Fourier analytic high-low method and the induction-on-scales argument. We will constantly apply the identification of the $p^{-j}$-cubes and tubes in $R_k^2$ with points and lines in $R_j^2$ for $j\leq k$, so that thickening points and lines in $R_k^2$ to $p^{-j}$-cubes and tubes is equivalent to moving from points and lines in $R_k^2$ to those in $R_j^2$. The key principle in the proof is that for sets of weighted points and lines, the total sum of weights is invariant under thickening. This provides more consistency in the behavior of sets of points and lines when moving from higher to lower scales. Additionally, a new incidence estimate will be obtained as a special case.

\subsection{Notations}
Throughout the paper, we use $R_k$ to denote the finite $p$-adic ring $\ZZ/p^k\ZZ$. For $x = (x_1,x_2),y = (y_1,y_2)\in R_k^2$, we define the $p$-adic distance $\|x-y\|$ as follows. We say that $\|x-y\| = p^{-j}$ for some $0\leq j \leq k$ if $p^j \mid (x-y)$ and $p^{j+1}\nmid (x-y)$; that is, $p^j \mid x_i-y_i$ for $i = 1,2$ and $p^{j+1}\nmid x_i-y_i$ for at least one $i$.\\
Let $\PP R_k^1$ to denote the projective space over $R_k^2$, which is defined as 
\begin{equation*}
    \PP R_k^1 = R_k^{2,\times}/R_1^{\times},
\end{equation*}
where $R_k^{2,\times}$ is the set of elements in $R_k^2$ with at least one invertible component, and $R_1^{\times} = (\ZZ/p\ZZ) \setminus \{0\}$. A nondegenerate line $\ell_b(a)$ in $R_k^2$ for some $a\in R_k^2$ and $b\in \PP R_k^1$ is the set $\{a+tb: t\in R_k\}$. Here we only consider nondegenerate lines that are copies of $R_k$, and this is equivalent to the lines $\ell$ with direction $b\in \PP R_k^1$. For two directions $b,b'\in \PP R_k^1$, we say that the $p$-adic angle of $b$ and $b'$ is at most $p^{-j}$, written as $\angle (b,b') \leq p^{-j}$, if there exist representatives $rb$ and $r'b'$ for $b$ and $b'$, respectively, so that $p^j \mid rb - r'b'$, and we say that $\angle(b,b') = p^{-j}$ if $\angle(b,b') \leq p^{-j}$ but $\angle(b.b') \not\leq p^{-(j+1)}$. If two lines $l$ and $l'$ in $R_k^2$ are in directions $b$ and $b'$, respectively, the angle $\angle(l,l')$ of the lines is defined as the angle of their directions.\\
For $0\leq j \leq k$, a $p^{-j}$-cube $Q_j$ in $R_k^2$ is a set of the form $Q_j = \{y\in R_k^2: p^j\mid (y-x)\}$ for some $x\in R_k^2$, which is the set of points in $R_k^2$ with $p$-adic distance from $x$ less than or equal to $p^{-j}$.\\
Define $\pi_j: R_k^2 \rightarrow R_j^2$, the projection from $R_k^2$ to $R_j^2$ for $0\leq j \leq k$, by 
\begin{equation*}
    \pi_j(x) = x\mod p^j.
\end{equation*}
A $p^{-j}$-tube $T_j$ is a set defined as $\pi_j^{-1}(\ell_j)$ for some line $\ell_j$ in $R_j^2$. Namely, it is the $p^{-j}$-neighborhood of some line $\ell$ in $R_k^2$ ($\ell$ is generally not unique).\\
Let $0\leq j\leq k-1$ and $1\leq r \leq k$. We say that two lines $l$ and $l'$ in $R_k^2$ are $p^{-j}$-separated in direction if $\angle(l,l') \geq p^{-j}$. Two \textit{disjoint} lines $l$ and $l'$ with angle $\angle(l,l') = p^{-r}$ in $R_k^2$ are in distance $p^{-j}$ for some $j < r$ if the distance of the projections $\pi_r(l)$ and $\pi_r(l')$ as parallel lines in $R_r^2$ is $p^{-j}$ (Notice that two lines with $\angle(l,l') = 1$ intersect necessarily, see Lemma 8.4 (i) in \cite{LabaTrainor}). And we call two lines $l$ and $l'$ $p^{-j}$-separated in distance if they are disjoint with distance $\geq p^{-j}$.\\
Notice that for every $x\in R_k^2$ and for $0\leq j \leq k$, $x$ can be uniquely represented as 
\begin{equation*}
    x = x_{*} + p^j x^{*} \mod p^k,\quad \text{with $x_{*}\in [p^j]^2$ and $x_{*}\in [p^{k-j}]^2$}.
\end{equation*}
Here we use $[n] = \{0,1,\dots, n-1\}$ for $n\in \NN$. 
For any $p^{-j}$-cube $Q$ in $R_k^2$ with $0\leq j \leq k$, $Q$ can be identified with $R_{k-j}^2$ via the rescaling map $\iota_{Q}: Q \rightarrow R_{k-j}^2$ given by $\iota_{Q}(x) = x^{*}$.\\
It can be shown that $\pi_j$ are linear and map lines in $R_k^2$ to lines in $R_j^2$. While $\iota_{Q}$ are not linear, they map the intersection of lines in $R_k^2$ with $Q$ to lines in $R_{k-j}^2$. We refer to Section 8 in \cite{LabaTrainor} for a full introduction to the $p$-adic geometric properties used in this paper.

\subsection{Organization}
In Section 2, we study incidences for well-separated lines in $(\ZZ/p^k\ZZ)^2$ and prove Theorem \ref{BKTanalogueThm} and \ref{SZp-adicThm}, highlighting the necessary modifications specific to the $p$-adic setting. In Section 3, we consider non-separated lines, in which case the Euclidean axiom does not hold. We begin with a combinatorial discussion on dimensional spacing conditions. Then we generalize these conditions to sets of weighted points and lines and prove Theorem \ref{weighted} via the Fourier analytic method and induction on scales.

\smallskip

\section{Incidence estimates for well-separated lines}
Let $\CP$ be a set of points of $R_k^2$, and $\CL$ a set of lines in $R_k^2$. Our current focus in this section will be on well-separated lines; that is, we always assume that every two lines $\ell$ and $\ell'$ of $\CL$ are either $1$-separated in direction or $1$-separated in distance. Under this separation condition, we are in a favorable geometry where two lines intersect at most one point. Since we can always embed configurations in $\FF_p^2$ into $R_k^2$, we may not expect better incidence bounds than those in finite fields. Our main results in this section indicate that we can actually maintain equally nice incidence estimates in the $p$-adic setting for this special case.\\
Recall that the modification of the arguments in finite fields reduces the grid-like structure attained from the refinement of sets of points and lines in $R_k^2$ to configurations in $(\ZZ/p\ZZ)^2$. This reduction to finite fields relies on the following lemma, which is the key ingredient in the proofs of Theorem \ref{BKTanalogueThm} and \ref{SZp-adicThm}. This is also a generalization of Lemma 8.6 in \cite{LabaTrainor} to higher scales.
\begin{lemma}\label{sepintersec}
    Let $q,q'\in R_k^2$ with $\|q-q'\| = p^{-j}$ for some $1 \leq j \leq k-1$. Suppose $l$ and $l'$ are lines in $R_k^2$ passing through $q,q'$, respectively, with $\angle (l,l') = 1$. Let $Q_j$ be the $p^{-j}$-cube containing both $q$ and $q'$. Then $l\cap l' \subseteq Q_j$.
\end{lemma}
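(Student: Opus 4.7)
The plan is to reduce the containment $l \cap l' \subseteq Q_j$ to a short linear-algebra computation over $R_k$. First I would parametrize the two lines using lifts of their directions: pick representatives $b,b' \in R_k^{2,\times}$ of the directions of $l,l'$ so that $l = \{q + tb : t \in R_k\}$ and $l' = \{q' + sb' : s \in R_k\}$. The hypothesis $\|q-q'\| = p^{-j}$ lets me write $q'-q = p^j u$ for some $u \in R_k^2$ with $u \not\equiv 0 \pmod p$.

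Next I would translate the intersection condition $q+tb = q'+sb'$ into the matrix equation
\begin{equation*}
    \begin{pmatrix} b_1 & -b'_1 \\ b_2 & -b'_2 \end{pmatrix} \begin{pmatrix} t \\ s \end{pmatrix} = p^j u.
\end{equation*}
The key observation is that the hypothesis $\angle(l,l') = 1$, unpacked from the definition of $p$-adic angle, means that the reductions of $b$ and $b'$ modulo $p$ are $\FF_p$-linearly independent, so the determinant $\Delta := b_1 b'_2 - b_2 b'_1$ is a \emph{unit} in $R_k$.

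With $\Delta$ invertible, Cramer's rule immediately gives $t = p^j \Delta^{-1}(b'_1 u_2 - b'_2 u_1)$, so $p^j \mid t$. Therefore any intersection point is of the form $q + tb$ with $p^j \mid tb$, which means it lies in the $p^{-j}$-cube $\{y \in R_k^2 : p^j \mid y - q\} = Q_j$, proving the containment.

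The argument is essentially routine once the definitions are unwound; the only point that needs care is verifying that the angle condition $\angle(l,l')=1$ translates precisely to invertibility of $\Delta$ in $R_k$ (rather than just in $R_1$), which follows from the fact that an element of $R_k$ is a unit iff its reduction mod $p$ is nonzero. This is the main, and essentially the only, place where the $p$-adic structure enters nontrivially.
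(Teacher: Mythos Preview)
Your proof is correct and follows the same underlying idea as the paper's: solve the linear system for the intersection parameters and use the angle condition to deduce $p^j \mid t$. The difference is purely in packaging. The paper normalizes each direction vector to one of the forms $(1,b)$ or $(rp,1)$ and then splits into two cases depending on which combination occurs, solving explicitly for $s$ in each case and checking that the relevant scalar coefficient ($b-b'$ or $rbp-1$) is a unit. Your matrix/Cramer's rule formulation absorbs this case analysis into the single observation that $\angle(l,l')=1$ forces the $2\times 2$ determinant $\Delta$ to be a unit in $R_k$, which is exactly what the paper's case-by-case unit checks amount to. Your version is cleaner and coordinate-free; the paper's version is more hands-on and makes the arithmetic completely explicit. (A trivial remark: your Cramer's rule expression for $t$ is off by a sign, but this of course does not affect the divisibility conclusion.)
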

\begin{proof}[Proof of Lemma \ref{sepintersec}]
    Let $q = (x,y)$ and $q' = (x',y')$. Under the assumption, there exist $c,d$ such that $x' = x + p^j c$, $y' = y + p^j d$ and either $p\nmid c$ or $p\nmid d$. We consider the following cases:\\
    \textit{Case 1}: $l: (x,y) + t(1,b)$ and $l': (x',y') + s(1,b')$ with $p\nmid b-b'$ and $s,t\in [p^k]$. Then the intersection point should satisfy 
    \begin{align*}
        x + t = x' + s = x + p^j c + s \mod p^k,\\
        y + tb = y' + sb' = y + p^j d + sb' \mod p^k.
    \end{align*}
    which implies that 
    \begin{align*}
        t = p^j c + s \mod p^k,\quad \text{and} \quad tb = p^j d + sb' = p^jc b + sb \mod p^k
    \end{align*}
    So we obtain $s(b-b') = p^j(d-cb) \mod p^k$.  Since $p\nmid (b-b')$, $(b-b')$ is invertible in $R_k$, and so 
    $s = p^j(d-cb)(b-b')^{-1} \mod p^k$. It follows that $p^j \mid s$, and the intersection point $q''$ satisfies $\|q''-q'\| \leq p^{-j}$, which means $q'' \in Q_j$.\\
    \textit{Case 2}: w.l.o.g. we may assume that $l: (x,y) + t(1,b)$ and $l': (x',y') + s(rp,1)$ with $s,t\in [p^k]$ for some $b\in [p^k]$ and $r\in [p^{k-1}]$. Then the intersection point should satisfy 
    \begin{align*}
        x + t = x' + rps = x + p^jc + rps \mod p^k,\\
        y + tb = y' + s = y + p^j d + s \mod p^k,
    \end{align*}
    which implies that 
    \begin{equation*}
        t = p^jc + rps \mod p^k \quad \text{and}\quad s = tb - p^j d \mod p^k.
    \end{equation*}
    So we obtain $(rbp-1)s = p^j(d-cb) \mod p^k$. Since $p\nmid (rbp-1)$, $(rbp-1)$ is invertible in $R_k$, and we have 
    $s = p^j(d-cb)(rbp-1)^{-1} \mod p^k$. Hence $p^j \mid s$ and again we have the intersection point $q''$ satisfies that $\|q''-q'\| \leq p^{-j}$, which means $q''\in Q_j$.
\end{proof}

\subsection{Proof of Theorem 1.1}
In this section, we prove Theorem \ref{BKTanalogueThm}, which concerns the incidence estimate of sets of points and lines of relatively small size with lines satisfying the separation condition. This is an extension of Theorem 6.2 in \cite{BKT04} for finite fields to $(\ZZ/p^k\ZZ)^2$.
\begin{proof}[Proof of Theorem \ref{BKTanalogueThm}]
    Notice that under the separation condition, the lines of $\CL$ now satisfy that two lines intersect in at most one point, and as a result, there is a unique line of $\CL$ (if it exists) passing through two distinct points of $\CP$. Thus, we are able to follow the popularity argument in \cite{BKT04} to refine the sets of points and lines and extract popular points for the grid-like structure. By adding dummy points and lines, we may assume that $|\CP| = |\CL| = N$. We may assume that $N$ is large (otherwise we can always pick a large enough constant $C$ so that the inequality holds). Let $\varepsilon \in (0,1)$ be small and be determined later. Suppose there exist such $\CP$ and $\CL$ so that $|\CI(\CP,\CL)| \gtrsim N^{\frac{3}{2}-\varepsilon}$. Applying repeatedly the pigeonhole principle to sets of points and lines as in \cite{BKT04}, we obtain subsets $\CP'$ and $\CP''$ of $\CP$, a subset $\CL'$ of $\CL$, and two distinct points $q_0,q_1\in \CP''$ with the following properties
    \begin{itemize}[parsep = 0pt, topsep = 3pt]
        \item For every $q\in \CP'$, $N^{\frac{1}{2}-\varepsilon} \lesssim \mu(q) \lesssim N^{\frac{1}{2}+\varepsilon}$, where $\mu(q) = |\{l\in \CL: q\in l\}|$;
        \item For every $l\in \CL'$, $N^{\frac{1}{2}-\varepsilon} \lesssim \lambda(l) \lesssim N^{\frac{1}{2}+\varepsilon}$, where $\lambda(l) = |\{q\in \CP': q\in l\}|$;
        \item For every $q\in \CP''$, $N^{\frac{1}{2}-\varepsilon} \lesssim \mu'(q) \lesssim N^{\frac{1}{2}+\varepsilon}$, where $\mu'(q) = |\{l\in \CL': q\in l\}|$;
        \item Let $\CL_i = \{l\in \CL': q_i\in l\}$, $i = 0,1$, and $\tilde{\CP} = \{q\in \CP': q\in l_0\cap l_1 \text{ for some } l_0\in \CL_0, l_1\in \CL_1\}$. We have $|\tilde{\CP}| \gtrsim N^{1-C\varepsilon}$ for some suitably chosen constant $C$; namely, the grid-like set $\tilde{\CP}$ covers the majority of points in $\CP$.
    \end{itemize}
    Suppose there exists $l_{0,1}\in \CL'$ passing through both $q_0$ and $q_1$, then since $\lambda(l_{0,1}) \lesssim N^{\frac{1}{2}+\varepsilon}$, we may remove $l_{0,1}$ from $\CL_i$, $i = 0,1$ and points of $\CP'$ on $l_{0,1}$ from $\tilde{\CP}$, and we still have $|\tilde{\CP}|\gtrsim N^{1-C\varepsilon}$ for some suitably chosen $C$ if $\varepsilon$ is sufficiently small. Now for every $l_0\in \CL_0$ and $l_1 \in \CL_1$ so that $l_0\cap l_1 \neq \emptyset$, it is necessary that $\angle (l_0,l_1) = 1$. This allows us to apply Lemma \ref{sepintersec}. Moreover, since each line $l\in \CL_i$ contains $\gtrsim N^{\frac{1}{2}-\varepsilon}$ points in $\CP'$ and $|\CP'| \leq N$, we have $|\CL_i| \lesssim N^{\frac{1}{2}+\varepsilon}$ for $i = 0,1$. This means there are $\lesssim N^{\frac{1}{2}+\varepsilon}$ lines of $\CL'$ passing through $q_0$, and $\lesssim N^{\frac{1}{2}+\varepsilon}$ lines of $\CL'$ passing through $q_1$.\\
    We now consider the following two cases.\\
    \textit{Case 1}: $\|q_0-q_1\| = 1$. Then under the separation condition, different points $q\in \tilde{\CP}$ lie in distinct $p^{-1}$-cubes. Thus, we can safely project onto $(\ZZ/p\ZZ)^2$ without losing any point in $\tilde{\CP}$. Let $\tilde{\CP}_1 = \pi_1(\tilde{\CP})$, $\CL_1 = \pi_1(\CL)$, $\CL_1' = \pi_1(\CL')$, $q_0' = \pi_1(q_0)$ and $q_1' = \pi_1(q_1)$. Then $q_0'$ and $q_1'$ are distinct points in $(\ZZ/p\ZZ)^2$, and the projection is injective on $\CL$. Since the projection preserves the incidences, $\tilde{\CP}_1$ is a subset that can be covered by the intersection points of $\lesssim N^{\frac{1}{2}+\varepsilon}$ lines of $\CL_1'$ passing through $q_0'$ and $\lesssim N^{\frac{1}{2}+\varepsilon}$ lines of $\CL_1'$ passing through $q_1'$. Furthermore, the projection is injective on $\tilde{\CP}$, which implies that $|\tilde{\CP}_1| = |\tilde{\CP}| \gtrsim N^{1-C\varepsilon}$.\\
    \textit{Case 2}: $\|q_0-q_1\| = p^{-j}$ for some $j \geq 1$. Then $q_0$ and $q_1$ lie in the same $p^{-j}$-cube $Q_j$. Lemma \ref{sepintersec} shows that $\tilde{\CP} \subseteq Q_j$.  
    Let $\CP_{k-j} = \iota_{Q_j}(\CP\cap Q_j)$, $\CL_{k-j} = \iota_{Q_j}(\CL \cap Q_j)$, $\CL'_{k-j} = \iota_{Q_j}(\CL'\cap Q_j)$ and $\tilde{\CP}_{k-j} = \iota_{Q_j}(\tilde{\CP}\cap Q_j)$. Let $q_0' = \iota_{Q_j}(q_0)$ and $q_1' = \iota_{Q_j}(q_1)$. Since rescaling is a bijection, $q_0'$ and $q_1'$ are distinct points in $R_{k-j}^2$. Meanwhile, the line segments within $Q_j$ are mapped injectively to lines in $R_{k-j}^2$. Note that rescaling on $Q_j$ also preserves the incidences, so $\tilde{\CP}_{k-j}$ can still be covered by the intersection of $\lesssim N^{\frac{1}{2}+\varepsilon}$ lines of $\CL_{k-j}'$ through $q_0'$ and $\lesssim N^{\frac{1}{2}+\varepsilon}$ lines of $\CL_{k-j}'$ through $q_1'$. In addition, after rescaling we have $\|q_0'-q_1'\| = 1$ in $R_{k-j}^2$ and all lines in $\CL_{k-j}$ are $1$-separated in direction (since the original lines in $\CL$ intersect the same $p^{-j}$-cube, these lines fail to be $1$-separated in distance). So different points of $\CP_{k-j}'$ lie in distinct $p^{-1}$-cubes in $R_{k-j}^2$, which allows us to project onto $(\ZZ/p\ZZ)^2$ without losing distinct points. Let $\CP_1 = \pi_1(\CP_{k-j})$, $\CL_1 = \pi_1(\CL_{k-j})$, $\CL_1' = \pi_1(\CL_{k-j}')$, and $\tilde{\CP}_1 = \pi_1(\tilde{\CP}_{k-j})$. Let $q_0'' = \pi_1(q_0')$ and $q_1'' = \pi_1(q_1')$. Similar to the previous case, we have $|\tilde{\CP}_1| = |\tilde{\CP}_{k-j}| = |\tilde{\CP}| \gtrsim N^{1-C\varepsilon}$. Since the projection preserves the incidences and maps injectively on $\CL_{k-j}$, $\tilde{\CP}_1$ can be covered by the intersection of $\lesssim N^{\frac{1}{2}+\varepsilon}$ lines of $\CL_1'$ through $q_0''$ and $\lesssim N^{\frac{1}{2}+\varepsilon}$ lines of $\CL_1'$ through $q_1''$.\\
    We now proceed with the set of points $\tilde{\CP}_1$ and the set of lines $\CL_1$ in $(\ZZ/p\ZZ)^2$, and we may still denote the two distinct points after rescaling and projection $q_0,q_1\in (\ZZ/p\ZZ)^2$. We can apply a projective transformation mapping $q_0,q_1$ to points at infinity (so the possible line in $\CL_1$ passing through both $q_0$ and $q_1$ is now the line at infinity, which will be discarded anyway) so that the lines in $\CL_1'$ passing through $q_0$ become horizontal lines, while the lines in $\CL_1'$ passing through $q_1$ become vertical lines. Then the lines in $\CL_1'$ through $q_0$ that cover $\tilde{\CP}_1$ are all of the form $\{x = a\}$ for some $a\in \ZZ/p\ZZ$. Let $A\subseteq \ZZ/p\ZZ$ be the set of all such $a$. Similarly, the lines in $\CL_1'$ through $q_1$ that cover $\tilde{\CP}_1$ are all of the form $\{y = b\}$ for some $b\in \ZZ/p\ZZ$. Let $B\subseteq \ZZ/p\ZZ$ be the set of all such $b$. Then the covering condition of $\tilde{\CP}_1$ shows that $|A|,|B| \lesssim N^{\frac{1}{2}+\varepsilon}$. At this point, we finish the step to put $\tilde{\CP}_1$ into a Cartesian product $A\times B$ with $|A|,|B| \lesssim N^{\frac{1}{2}+\varepsilon}$. Note that the injectivity of rescaling and projection on lines and on $\tilde{\CP}$ implies that for each $q\in \tilde{\CP}_1$, we still have 
    \begin{equation*}
        |\{l\in \CL_1: q\in l\}| \gtrsim N^{\frac{1}{2}-\varepsilon}.
    \end{equation*}
    Following the rest of the proof in \cite{BKT04} to extract the additive and multiplicative information on $A$ and $B$ and apply the sum-product estimate, we arrive at the same contradiction.
\end{proof}

\subsection{Proof of Theorem 1.2}
In this section, we prove Theorem \ref{SZp-adicThm}, which extends the result of Theorem 3 in \cite{SZ17} to well-separated lines in $p$-adics. Instead of small sets of points and lines, we now consider sets of points and lines that are of comparable sizes (not necessarily very small), while the lines are still assumed to satisfy the separation condition.\\
We state a $p$-adic analogue of Lemma 8 in \cite{SZ17}, which can be derived directly from the proof in \cite{SZ17}.
\begin{lemma}\label{Cartprod}
    Let $\CP$ be a set of $m$ points of $R_k^2$ and $\CL$ a set of $n$ lines in $R_k^2$ satisfying the separation condition. Fix any constants $c_2 > c_1 > 0$ and suppose that between $c_1K$ and $c_2K$ lines of $\CL$ pass through each point of $\CP$. Assume further that $K \geq 4n/(c_1m)$, $K \geq 8/c_1$ and $K^3\geq 2^6n^2/(c_1^3m)$. Then there are distinct points $q_1,q_2\in \CP$ and a set $G \subseteq \CP\setminus \overline{q_1q_2}$ of cardinality $|G| \geq c_1^4K^4m/(2^9n^2)$ such that $G$ is covered by at most $c_2K$ lines from $\CL$ through $q_1$, and by at most $c_2K$ lines from $\CL$ through $q_2$, where $\overline{q_1q_2}$ is the line in $\CL$ (if exists) containing the points $q_1,q_2\in \CP$.
\end{lemma}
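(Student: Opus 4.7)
The plan is to adapt the combinatorial proof of Lemma 8 in \cite{SZ17} to this setting; as noted in the statement, the adaptation is essentially immediate. The only geometric input used in \cite{SZ17} is that two distinct points of $\CP$ lie on at most one line of $\CL$, or equivalently that two distinct lines of $\CL$ share at most one point. In the finite-field setting this is automatic, and under the separation hypothesis it also holds here: two lines of $\CL$ that are $1$-separated in distance are disjoint, while two lines of $\CL$ with $\angle(\ell,\ell')=1$ meet in a single point by Lemma 8.4(i) of \cite{LabaTrainor}, which is also the $j=0$ case of the computation in Lemma \ref{sepintersec}. Once this ``Euclidean axiom'' is in place, the rest of the proof is purely combinatorial.

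The combinatorial skeleton is as follows. For each ordered pair of distinct points $q_1, q_2 \in \CP$, let $G_{q_1,q_2}$ be the set of $q \in \CP\setminus\overline{q_1q_2}$ that are collinear (via some line of $\CL$) with each of $q_1$ and $q_2$. Summing $|G_{q_1,q_2}|$ over ordered pairs is equivalent to counting ordered triples $(q,q_1,q_2)$ of distinct points of $\CP$ with the above collinearity properties, and for each $q \in \CP$ the number of admissible $(q_1,q_2)$ equals $N(q)(N(q)-1)$ minus an error accounting for pairs lying with $q$ on a single line of $\CL$; here $N(q)$ denotes the number of other points of $\CP$ collinear with $q$ via $\CL$. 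Two applications of the Cauchy--Schwarz inequality then provide the desired lower bound: first applied to the incidence sum $\sum_{\ell\in\CL}|\ell\cap\CP|$ to lower-bound $\sum_{q}N(q)$ in terms of $I^2/n$ (where $I$ is the total incidence count, with $c_1 K m \leq I \leq c_2 K m$), and then applied to $\sum_{q\in\CP}N(q)$ to lower-bound $\sum_q N(q)^2$, ultimately producing a main-term lower bound of order $c_1^4 K^4 m^3/n^2$ on the triple count. The three numerical hypotheses on $K$ are exactly those required to absorb the various lower-order error terms (arising from the $-1$'s in the two Cauchy--Schwarz steps and from the subtraction enforcing $q\notin\overline{q_1q_2}$), yielding a main-term bound of $c_1^4K^4m^3/(2^8n^2)$.

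Finally, pigeonholing over the $m(m-1)<m^2$ ordered pairs $(q_1,q_2)$ extracts a single pair for which $G := G_{q_1,q_2}$ has size at least $c_1^4K^4m/(2^9n^2)$. By construction each $q\in G$ lies on some line of $\CL$ through $q_1$ and on some line of $\CL$ through $q_2$, and since at most $c_2K$ lines of $\CL$ pass through each of $q_1, q_2$ by hypothesis, $G$ is covered by at most $c_2K$ lines of $\CL$ through $q_1$ and by at most $c_2K$ lines of $\CL$ through $q_2$, as required. The only substantive obstacle in transplanting the finite-field argument is the verification of the Euclidean axiom, which is handled at the outset by the separation hypothesis via Lemma \ref{sepintersec}; everything else amounts to reproducing the bookkeeping of \cite{SZ17}.
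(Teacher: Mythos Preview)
Your proposal is correct and takes essentially the same approach as the paper: both observe that the separation hypothesis yields the Euclidean axiom (two lines of $\CL$ meet in at most one point, hence two points of $\CP$ lie on at most one line of $\CL$), after which the purely combinatorial argument of Lemma~8 in \cite{SZ17} carries over verbatim. The paper's proof is in fact terser than yours---it simply invokes \cite{SZ17} once the Euclidean axiom is noted---while you additionally sketch the double Cauchy--Schwarz/pigeonhole skeleton, but this is the same argument.
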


\begin{proof}
    Note that due to the separation condition, if $\overline{q_1q_2} \in \CL$ exists, then it must be unique.\\
    Since the lines of $\CL$ satisfy the separation condition, we are essentially under the Euclidean axiom, where two lines of $\CL$ intersect in at most one point of $\CP$, and two points of $\CP$ lie in at most one line of $\CL$. Therefore, the proof follows the exact same argument as in \cite{SZ17}. 
\end{proof}

Moreover, since the Euclidean axiom still holds for well-separated lines, it is easy to use the same combinatorial argument to obtain the following Cauchy-Schwarz incidence bound as in the Euclidean spaces.
\begin{lemma}\label{CSbd}
    Let $\CP$ be a set of $m$ points in $R_k^2$ and $\CL$ a set of $n$ lines in $R_k^2$ satisfying the separation condition. Then 
    \begin{align*}
        |\CI(\CP,\CL)| & \lesssim mn^{\frac{1}{2}} + n,\\
        |\CI(\CP,\CL)| & \lesssim nm^{\frac{1}{2}} + m.
    \end{align*}
\end{lemma}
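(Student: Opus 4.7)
The plan is to run the standard double-counting argument for point-line incidences, which goes through verbatim as soon as the Euclidean axiom (two distinct points on at most one common line, two distinct lines meeting in at most one point) is available. The separation hypothesis on $\CL$ guarantees exactly this: any two lines in $\CL$ are either $1$-separated in direction, in which case Lemma \ref{sepintersec} (applied at scale $j=0$, or simply the fact that lines with angle $1$ intersect in a unique point when they meet) limits their intersection to a single point; or they are $1$-separated in distance, in which case they are disjoint. Dually, if two points of $\CP$ were to lie on two distinct common lines of $\CL$, those lines would meet in two points, contradicting the previous sentence. Thus both halves of the Euclidean axiom hold in $R_k^2$ under our assumptions.

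For the first bound, set $\lambda(l)=|\{q\in\CP:q\in l\}|$, so that $|\CI(\CP,\CL)|=\sum_{l\in\CL}\lambda(l)$. Count ordered triples $(q_1,q_2,l)$ with $q_1\neq q_2$ in $\CP\cap l$: on the one hand this count equals $\sum_l\lambda(l)(\lambda(l)-1)$; on the other hand, since each ordered pair of distinct points of $\CP$ lies on at most one line of $\CL$, this count is at most $m(m-1)\leq m^2$. Hence $\sum_l\lambda(l)^2\leq m^2+|\CI(\CP,\CL)|$. Cauchy--Schwarz then gives
\begin{equation*}
|\CI(\CP,\CL)|=\sum_{l\in\CL}\lambda(l)\leq n^{1/2}\Bigl(\sum_l\lambda(l)^2\Bigr)^{1/2}\leq n^{1/2}(m^2+|\CI(\CP,\CL)|)^{1/2},
\end{equation*}
and squaring and solving the resulting quadratic in $|\CI(\CP,\CL)|$ yields $|\CI(\CP,\CL)|\lesssim mn^{1/2}+n$.

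For the second bound, swap the roles of points and lines. With $\mu(q)=|\{l\in\CL:q\in l\}|$, count ordered triples $(l_1,l_2,q)$ of distinct lines of $\CL$ through a common point $q\in\CP$; by the Euclidean axiom this is bounded by $n(n-1)\leq n^2$, while it equals $\sum_q\mu(q)(\mu(q)-1)$. Another application of Cauchy--Schwarz over $\CP$ gives $|\CI(\CP,\CL)|\lesssim nm^{1/2}+m$.

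There is no real obstacle here: the only content beyond the classical Euclidean argument is verifying the Euclidean axiom in $R_k^2$ for well-separated lines, and this has already been supplied by the separation hypothesis together with Lemma \ref{sepintersec}. The rest is pure double counting plus Cauchy--Schwarz, and so the proof reduces to writing the two lines above in each direction.
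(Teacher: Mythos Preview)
Your proposal is correct and matches the paper's approach exactly: the paper does not spell out a proof but simply notes that the Euclidean axiom holds for well-separated lines and that the standard combinatorial (double-counting plus Cauchy--Schwarz) argument from the Euclidean setting goes through. Your write-up is precisely that argument with the details filled in; the only minor remark is that Lemma~\ref{sepintersec} as stated requires $j\geq 1$, so the uniqueness of the intersection when $\angle(l,l')=1$ should be cited directly (as you do parenthetically) rather than as the $j=0$ case of that lemma.
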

Now we proceed with proving Theorem \ref{SZp-adicThm}.
\begin{proof}[Proof of Theorem \ref{SZp-adicThm}]
    As in the proof of Theorem 3 in \cite{SZ17}, we shall prove that there exists a constant $C$ such that for all $\CP$ and $\CL$ with $n^{\frac{7}{8}} < m < n^{\frac{8}{7}}$, $|\CI(\CP,\CL)| < Cm^{\frac{11}{15}}n^{\frac{11}{15}}$. We prove by induction, keeping $n$ fixed and varying $m$.\\
    Base step: For any $m$ such that $n^{\frac{4}{11}} < m < n^{\frac{7}{8}}$, Lemma \ref{CSbd} gives 
    \begin{equation*}
        |\CI(\CP,\CL)| \lesssim mn^{\frac{1}{2}} + n \leq 2m^{\frac{11}{15}}n^{\frac{11}{15}}.
    \end{equation*}
    Induction step: We now assume that for any point set $\CP'$ with $|\CP'| = m'$ and $n^{\frac{7}{8}} < m' < m$, we have $|\CI(\CP',\CL)| \leq C(m')^{\frac{11}{15}}n^{\frac{11}{15}}$. Suppose to the contrary that $|\CI(\CP,\CL)| \geq Cm^{\frac{11}{15}}n^{\frac{11}{15}}$.\\
    Let $I \coloneqq |\CI(\CP,\CL)|$ and $K \coloneqq I/m$. Through repeated applications of Lemma \ref{Cartprod} as in the proof of Theorem 3 in \cite{SZ17}, we obtain sequences $\{A_1,\dots, A_{s+1}\}$ and $\{G_1,\dots, G_s\}$ of subsets of $\CP$ such that 
    \begin{itemize}[parsep = 1pt, topsep = 3pt]
        \item $A_1 = A = \{q\in \CP: \text{there are between $2^{-11}K$ and $2^{15}K$ lines of $\CL$ through $q$}\}$;
        \item $A_1 \supseteq A_2\supseteq \cdots \supseteq A_{s+1}$ with $|A_{s+1}| \leq 2^{-15}m$. For each $1\leq i \leq s$, $G_i \subseteq A_i$ and $A_{i+1} = A_i\setminus G_i$;
        \item $s \leq \frac{m}{\min_i\{|G_i|\}} \leq \frac{2^{68}n^2}{K^4}$, and $|G_i| \geq \frac{c_1^4K^4|A_i|}{2^9n^2} \geq \frac{(2^{-11})^4K^4(2^{-15}m)}{2^9 n^2} \geq \frac{K^4 m}{2^{68}n^2}$ for $\forall\ 1\leq i \leq s$;
        \item For each $i$, there are distinct points $q_{1,i},q_{2,i} \in A_i$ such that $G_i$ is covered by the intersection of at most $2^{15}K$ lines from $\CL$ through $q_{1,i}$ with at most $2^{15}K$ lines from $\CL$ through $q_{2,i}$;
        \item $G_1,\dots, G_s$ contribute at least $I/2$ incidences. In particular, $I \leq 2 \sum\limits_{i=1}^s |\CI(G_i,\CL)|$.
    \end{itemize}
    Now we estimate $|\CI(G_i,\CL)|$. Consider the two special points $q_{1,i}$ and $q_{2,i}$ for $G_i$. Then $G_i$ is covered by the intersections of lines in $\CL$ through $q_{1,i}$ and lines in $\CL$ through $q_{2,i}$. Moreover, since $G_i \cap \overline{q_{1,i}q_{2,i}} = \emptyset$ provided there exists some (unique) line $\overline{q_{1,i}q_{2,i}} \in \CL$ passing through both $q_{1,i}$ and $q_{2,i}$, every point of $G_i$ is the intersection of lines $l_{1,i}$ passing through $q_{1,i}$ and $l_{2,i}$ passing through $q_{2,i}$ with $\angle(l_{1,i},l_{2,i}) = 1$, which allows us to apply Lemma \ref{sepintersec}. We consider the following two cases:\\
    \textit{Case 1}: $\|q_{1,i}-q_{2,i}\| = 1$. Then under the separation condition, every $p^{-1}$-cube contains at most one point of $G_i$. Let $G_i' = \pi_1(G_i)$, $q_{1,i}' = \pi_1(q_{1,i})$, $q_{2,i}' = \pi_1(q_{2,i})$, and $\CL' = \pi_1(\CL)$. Then $q_{1,i}'$ and $q_{2,i}'$ are distinct points in $(\ZZ/p\ZZ)^2$. Since the projection preserves the incidences, and the lines of $\CL$ satisfy the separation condition, $G_i'$ is still covered by at most $2^{15}K$ lines of $\CL'$ through $q_{1,i}'$ and by at most $2^{15}K$ lines of $\CL$ through $q_{2,i}'$. Moreover, we have $|G_i'| = |G_i|$, $|\CL'| = |\CL|$ and $|\CI(G_i,\CL)| = |\CI(G_i',\CL')|$ (since every point of $G_i$ is an intersection of two lines of $\CL$, and two lines of $\CL$ intersect in at most one point).\\
    \textit{Case 2}: $\|q_{1,i}-q_{2,i}\|  = p^{-j}$ for some $j \geq 1$. Then $q_{1,i}$ and $q_{2,i}$ lie in the same $p^{-j}$-cube $Q_j$. This means the lines in $\CL$ through $q_{1,i}$ and through $q_{2,i}$ are all $1$-separated in direction. Lemma \ref{sepintersec} shows that $G_i \subseteq Q_j$.\\
    Let $\tilde{G}_i = \iota_{Q_j}(G_i\cap Q_j)$, $\tilde{\CL}= \iota_{Q_j}(\CL_j)$, where $\CL_j$ is the set of intersections of lines in $\CL$ with $Q_j$. Let $\tilde{q}_{1,i} = \iota_{Q_j}(q_{1,i})$ and $\tilde{q}_{2,i} = \iota_{Q_j}(q_{2,i})$. Since rescaling is a bijection, $\tilde{q}_{1,i}$ and $\tilde{q}_{2,i}$ are distinct points in $R_{k-j}^2$ and $|\tilde{G}_i| = |G_i|$, $|\tilde{\CL}| = |\CL_j|$. Note that rescaling on $Q_j$ also preserves the incidences, so $\tilde{G}_i$ is covered by at most $2^{15}K$ lines in $\tilde{\CL}$ through $\tilde{q}_{1,i}$ and by at most $2^{15}K$ lines in $\tilde{\CL}$ through $\tilde{q}_{2,i}$, and we have $|\CI(\tilde{G}_i,\tilde{\CL})| = |\CI(G_i,\CL)|$. In addition, we have $\|\tilde{q}_{1,i}-\tilde{q}_{2,i}\| = 1$ in $R_{k-j}^2$ and all lines in $\tilde{\CL}$ are still $1$-separated in direction. Thus, each $p^{-1}$-cube in $R_{k-j}^2$ contains at most one point of $\tilde{G}_i$.\\ 
    Similar to Case 1, let $G_i' = \pi_1(\tilde{G}_i)$, $\CL' = \pi_1(\tilde{\CL})$, $q_{1,i}' = \pi_1(\tilde{q}_{1,i})$ and $q_{2,i}' = \pi_1(\tilde{q}_{2,i})$. Since projection preserves the incidences, and the lines of $\tilde{\CL}$ satisfy the separation condition, $G_i'$ is still covered by at most $2^{15}K$ lines of $\CL'$ through $q_{1,i}'$ and by at most $2^{15}K$ lines of $\CL$ through $q_{2,i}'$. Moreover, $|G_i'| = |\tilde{G}_i| = |G_i|$, $|\CL'| = |\tilde{\CL}| = |\CL|$ and $|\CI(G_i',\CL')| = |\CI(\tilde{G}_i,\tilde{\CL})| = |\CI(G_i,\CL)|$.\\
    Now it suffices to consider the corresponding $G_i'$, $\CL'$, $q_{1,i}'$ and $q_{2,i}'$ in $(\ZZ/p\ZZ)^2$, which allows us to use projective transformations and apply Theorem 4 in \cite{SZ17}. Therefore, arguing as the proof in \cite{SZ17}, we can choose $C$ sufficiently large, independent of $n$ and $m$, leading to a contradiction. 
\end{proof}

\subsection{Summary of incidence estimates for well-separated lines}
Recall that since we assume that the lines of $\CL$ satisfy the separation condition, we have the Cauchy-Schwarz bounds Lemma \ref{CSbd}:
\begin{align*}
    |\CI(\CP,\CL)| & \lesssim |\CP|^{\frac{1}{2}}|\CL| + |\CP|;\\
    |\CI(\CP,\CL)| & \lesssim |\CP||\CL|^{\frac{1}{2}} + |\CL|.
\end{align*}
In particular, when $|\CP| \gg |\CL|^2$, the dominated term would be $|\CP|$, so for all large sets $|\CP|$ (which appears very generally for large $k$), the incidence bound becomes $|\CI(\CP,\CL)| \lesssim |\CP|$.\\
Combined with Theorem \ref{SZp-adicThm}, we have the same table of upper bounds in $p$-adics as in finite fields:
\begin{center}
\begin{tabular}{ |c c c| } 
 \hline
 Range of $|\CL|$ & \qquad \qquad & Best bound\\
 \hline
$|\CL| < |\CP|^{\frac{1}{2}}$ & \qquad \qquad & $\lesssim |\CP|$ \\ 
 $|\CP|^{\frac{1}{2}} < |\CL| < |\CP|^{\frac{7}{8}}$ & \qquad \qquad &  $\lesssim |\CP|^{\frac{1}{2}}|\CL|$ \\ 
  $|\CP|^{\frac{7}{8}} < |\CL| < |\CP|^{\frac{8}{7}}$ & \qquad \qquad &  $\lesssim |\CP|^{\frac{11}{15}}|\CL|^{\frac{11}{15}}$\\ 
  $|\CP|^{\frac{8}{7}} < |\CL| < |\CP|^2$ & \qquad \qquad &  $\lesssim |\CP||\CL|^{\frac{1}{2}}$\\
  $|\CP|^2 < |\CL| $ & \qquad \qquad &  $\lesssim |\CL|$\\
 \hline
\end{tabular}
\end{center}

\smallskip

\section{Incidences for non-separated lines}
We now discuss more general point-line incidences in the $p$-adic setting, where the Euclidean axiom is violated and thus two lines can intersect at more than one point. We pay special attention to incidences for sets of points and lines that possess dimensional structures, in which case we say that these sets satisfy certain dimensional spacing conditions.

\subsection{Simple dimensional spacing conditions}
We first introduce a simple version of dimensional spacing conditions concerning only sets of unweighted points and lines, as motivation for the further generalization in the next section. These conditions provide additional information on the distributions of points and lines, which allows us to estimate the incidences in a more delicate manner.
\begin{definition}
    Let $\alpha,\beta \in [0,2]$. A set $\CP$ of points in $R_k^2$ is called an $\alpha$-set if for every $0\leq j \leq k$ and for every $p^{-j}$-cube $Q_j$, 
    \begin{equation*}
        |\CP\cap Q_j| \leq p^{\alpha(k-j)}.
    \end{equation*}
    A set $\CL$ of lines in $R_k^2$ is called an $\beta$-set if for every $0\leq j \leq k$, and for every $p^{-j}$-tube $T_j$, 
    \begin{equation*}
        |\{l\in \CL: l\subseteq T_j\}| \leq p^{\beta(k-j)}.
    \end{equation*}
\end{definition}
\begin{remark}
    We see that every $p^{-j}$-cube $Q_j$ contains $p^{2(k-j)}$ points of $R_k^2$, and every $p^{-j}$-tube contains $p^{2(k-j)}$ lines in $R_k^2$. The uniformity among cubes or tubes endows the set with a dimensional structure.
\end{remark}

Now let $\CL$ be a set of lines in $R_k^2$. Define for each $l\in \CL$
\begin{equation*}
    \CL_j(l) = \{l'\in \CL: l'\cap l \neq \emptyset,\ \angle(l,l') = p^{-j}\} \quad \text{for}\ \ 0 \leq j \leq k.
\end{equation*}
In particular, if $\CL$ is an $\beta$-set of lines, then by definition, we have 
\begin{equation*}
    |\CL_j(l)| \leq p^{\beta(k-j)} \quad \text{for every $l\in \CL$},
\end{equation*}
because $\angle(l,l') = p^{-j}$ and $l\cap l' \neq \emptyset$ imply that $l$ and $l'$ lie in the same $p^{-j}$-tube.\\
Let $\CP$ be a set of points of $R_k^2$. Define for each $q\in \CP$ 
\begin{equation*}
    \CL(q) = \{l\in \CL: q\in l\}.
\end{equation*}

The following lemma can be easily deduced from the basic $p$-adic geometry.
\begin{lemma}\label{intersectsize}
    Let $\CP$ be an $\alpha$-set of points of $R_k^2$, $\CL$ an $\beta$-set of lines in $R_k^2$. Fix $l\in \CL$. For every $0 \leq j \leq k$, we have 
    \begin{equation*}
        \sum\limits_{q\in l} |\CL(q)\cap \CL_j(l)| \leq p^{aj}|\CL_j(l)|,
    \end{equation*}
    where $a = \min\{\alpha,1\}$.
\end{lemma}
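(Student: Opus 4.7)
The plan is to swap the order of summation and then control each term by combining the geometric structure of line intersections with the dimensional spacing condition on $\CP$.

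First I would rewrite the left-hand side by interchanging the sums. The sum $\sum_{q\in l} |\CL(q) \cap \CL_j(l)|$ counts pairs $(q, l')$ with $q \in \CP \cap l$, $l' \in \CL_j(l)$, and $q \in l'$. Grouping by $l'$ gives
\begin{equation*}
    \sum_{q \in l} |\CL(q) \cap \CL_j(l)| \;=\; \sum_{l' \in \CL_j(l)} |\CP \cap l \cap l'|.
\end{equation*}
It therefore suffices to show $|\CP \cap l \cap l'| \leq p^{aj}$ for every $l' \in \CL_j(l)$.

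Next I would analyze $l \cap l'$ directly, reusing the computation from the proof of Lemma \ref{sepintersec}. Parametrizing $l$ by $t$ and $l'$ by $s$, write the direction difference as $b - b' = p^j u$ with $p \nmid u$ (or handle the symmetric case as in the proof of Lemma \ref{sepintersec}). The intersection equations force $t p^j u \equiv (\text{fixed element}) \pmod{p^k}$, so $t$ is determined modulo $p^{k-j}$, giving exactly $p^j$ solutions for $t$ in $[p^k]$. Hence $|l \cap l'| \leq p^j$. Moreover, any two such solutions differ by a multiple of $p^{k-j}$, so the corresponding intersection points in $R_k^2$ differ by vectors of $p$-adic norm at most $p^{-(k-j)}$; consequently all of $l \cap l'$ lies inside a single $p^{-(k-j)}$-cube $Q_{k-j}$.

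With these two facts, I would combine them using the $\alpha$-set hypothesis. On the one hand, $|\CP \cap l \cap l'| \leq |l \cap l'| \leq p^j$. On the other hand, since $l \cap l' \subseteq Q_{k-j}$ and $\CP$ is an $\alpha$-set,
\begin{equation*}
    |\CP \cap l \cap l'| \leq |\CP \cap Q_{k-j}| \leq p^{\alpha (k-(k-j))} = p^{\alpha j}.
\end{equation*}
Taking the minimum yields $|\CP \cap l \cap l'| \leq p^{\min(\alpha,1)\, j} = p^{aj}$, and summing over $l' \in \CL_j(l)$ gives the claimed bound. The only slightly delicate step is the direct intersection calculation pinning down $|l \cap l'|$ and the containment in a single $p^{-(k-j)}$-cube, but this is a straightforward adaptation of the argument already carried out in Lemma \ref{sepintersec}, now allowing arbitrary angle exponent $j$ rather than requiring $\angle(l,l') = 1$.
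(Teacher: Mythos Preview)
Your proof is correct and is exactly the ``basic $p$-adic geometry'' argument the paper alludes to without writing out: swap the order of summation, use that two lines with $\angle(l,l')=p^{-j}$ meeting nontrivially intersect in a set of size $p^j$ contained in a single $p^{-(k-j)}$-cube, and then combine the trivial bound $p^j$ with the $\alpha$-set bound $p^{\alpha j}$ on that cube. The paper omits the proof, and your write-up fills it in along the intended lines.
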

An incidence estimate can also be attained following the same combinatorial argument in Section 3 of \cite{FR24} using Lemma \ref{intersectsize}. We state it below and omit the proof.
\begin{proposition}\label{alphabeta}
    Let $\CP$ be an $\alpha$-set of points in $R_k^2$, and $\CL$ be a $\beta$-set of lines in $R_k^2$.
    \begin{enumerate}[label = (\alph*), parsep = 0pt, topsep = 2pt]
        \item Let $a = \min(\alpha,1)$ and assume $a \geq \beta$. Then 
        \begin{equation*}
            |I(\CP,\CL)| \leq p^{k\frac{a\beta}{a+\beta}} (k+1)^{\frac{a}{a+\beta}}|\CP|^{\frac{a}{a+\beta}}|\CL|^{\frac{\beta}{a+\beta}}.
        \end{equation*}
        \item Let $b = \min(\beta,1)$ and assume $b \geq \alpha$. Then 
        \begin{equation*}
            |I(\CP,\CL)| \leq p^{k\frac{\alpha b}{\alpha+b}} (k+1)^{\frac{b}{b+\alpha}}|\CP|^{\frac{\alpha}{\alpha+b}}|\CL|^{\frac{b}{\alpha+b}}.
        \end{equation*}
    \end{enumerate}
\end{proposition}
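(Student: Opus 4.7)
The plan is to run a Cauchy--Schwarz energy argument, estimating the second moment $\sum_q r_q^2$ (with $r_q := |\CL(q)|$) by decomposing the off-diagonal contribution according to the $p$-adic angle between pairs of lines. Writing
\begin{equation*}
\sum_q r_q^2 \;=\; I(\CP,\CL) \;+\; \sum_{j=0}^{k-1} E_j, \qquad E_j := \#\{(q,l,l'): q\in \CP\cap l\cap l',\ l\neq l',\ \angle(l,l') = p^{-j}\},
\end{equation*}
the task reduces to bounding each angular energy $E_j$ using the hypotheses on both sides. I will derive two complementary estimates for $E_j$ and interpolate.

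The \emph{line-side} bound is obtained by fixing $l$ and applying Lemma \ref{intersectsize}: $\sum_{q\in l\cap \CP} |\CL(q)\cap \CL_j(l)| \leq p^{aj}|\CL_j(l)|$. Summing over $l \in \CL$ and using the $\beta$-set bound $|\CL_j(l)| \leq p^{\beta(k-j)}$ yields $E_j \leq |\CL|\,p^{aj+\beta(k-j)}$. The \emph{point-side} bound comes from fixing $q\in \CP$ and $l\in \CL(q)$: any $l'\in \CL(q)$ with $\angle(l,l') \leq p^{-j}$ lies in the $p^{-j}$-tube $T = \pi_j^{-1}(\pi_j(l))$ containing $l$, because $\angle(b,b') \leq p^{-j}$ forces $\pi_j(b) = \pi_j(b')$ in $\PP R_j^1$ and hence $\pi_j(l') = \pi_j(l)$; the $\beta$-set hypothesis then gives at most $p^{\beta(k-j)}$ such $l'$, and summing over $l$ and $q$ produces $E_j \leq I(\CP,\CL)\cdot p^{\beta(k-j)}$.

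Interpolating via $\min(x,y) \leq x^{\beta/a}\,y^{(a-\beta)/a}$ (valid since $a \geq \beta$) combines the two estimates into
\begin{equation*}
E_j \;\leq\; p^{\beta(k-j)}\bigl(|\CL|\,p^{aj}\bigr)^{\beta/a} I(\CP,\CL)^{(a-\beta)/a} \;=\; p^{\beta k}\,|\CL|^{\beta/a}\, I(\CP,\CL)^{(a-\beta)/a},
\end{equation*}
which is notably independent of $j$; the choice $\theta = \beta/a$ is exactly what cancels the $j$-dependence. Summing over the $k$ values of $j$ contributes a factor of $k+1$, and plugging into $I(\CP,\CL)^2 \leq |\CP|\sum_q r_q^2$ gives, in the regime where the energy term dominates over $I$ itself,
\begin{equation*}
I(\CP,\CL)^{(a+\beta)/a} \;\lesssim\; (k+1)\,|\CP|\,p^{\beta k}|\CL|^{\beta/a},
\end{equation*}
which rearranges to the bound in (a); the complementary regime $I(\CP,\CL) \lesssim |\CP|$ is subsumed by the same target estimate, and the small constants arising from the Cauchy--Schwarz dichotomy are absorbed into the factor $(k+1)^{a/(a+\beta)}$. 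Part (b) follows by an entirely symmetric argument: decompose $\sum_l m_l^2$ with $m_l = |\CP\cap l|$ and bound the off-diagonal terms using the $\alpha$-set property (each intersection $l\cap l'$ of lines with angle $p^{-j}$ lies in a $p^{-(k-j)}$-cube, giving at most $p^{\alpha j}$ points of $\CP$) together with a dual point-side bound derived from the $\alpha$-set cube hypothesis.

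The main delicate step is verifying the point-side bound --- specifically that $\angle(l,l') \leq p^{-j}$ together with $q \in l\cap l'$ forces $l' \subseteq T$ for $T$ the $p^{-j}$-tube containing $l$. This follows from the definition $\PP R_k^1 = R_k^{2,\times}/R_1^\times$ together with the observation that $R_1^\times \subset R_j^\times$, so the projection $\pi_j : \PP R_k^1 \to \PP R_j^1$ is well-defined and respects the angular structure; this is standard in the framework of \cite{LabaTrainor} but warrants explicit verification before it is invoked.
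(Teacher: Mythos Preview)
Your strategy --- Cauchy--Schwarz on $\sum_q r_q^2$, decomposition of the off-diagonal into angular pieces $E_j$, and interpolation between the line-side bound from Lemma~\ref{intersectsize} and the point-side tube bound --- is exactly the combinatorial argument the paper imports from \cite{FR24}, and your verification that $q\in l\cap l'$ together with $\angle(l,l')\le p^{-j}$ forces $l'$ into the $p^{-j}$-tube about $l$ is correct.

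The one soft spot is the treatment of the diagonal contribution $I$. Splitting into ``energy dominates'' versus ``$I\lesssim |\CP|$'' costs a factor of $2$ in the energy-dominated branch, and $(2k)^{a/(a+\beta)}$ genuinely exceeds $(k+1)^{a/(a+\beta)}$ for every $k\ge 2$; since the proposition is stated with an exact $\le$, this constant cannot simply be ``absorbed'' as you claim. The clean fix avoids the dichotomy altogether: the diagonal equals $I$, but it also equals $\sum_{l\in\CL}|\CP\cap l|\le |\CL|\,p^{ak}$, because the $\alpha$-set condition forces $|\CP\cap l|\le p^{ak}$ for every line $l$ (cover $l$ by a single $p^{-0}$-cube if $\alpha\le 1$, or use $|l|=p^k$ if $\alpha>1$). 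Interpolating these two bounds with the same exponent $\theta=\beta/a$ gives
\[
\text{diagonal}\ \le\ I^{(a-\beta)/a}\,|\CL|^{\beta/a}\,p^{\beta k},
\]
identical to the bound for each $E_j$. Now $\sum_q r_q^2$ is a sum of exactly $k+1$ such terms (the diagonal plus $E_0,\dots,E_{k-1}$), so $I^2\le |\CP|\sum_q r_q^2$ yields $I^{(a+\beta)/a}\le (k+1)|\CP|\,|\CL|^{\beta/a}p^{\beta k}$ with no stray constant, and the stated bound follows on the nose. For part~(b) your sketch is slightly tangled --- the off-diagonal of $\sum_l m_l^2$ is indexed by pairs of \emph{points}, so the relevant scale parameter is $\|q-q'\|=p^{-j}$ rather than an angle between lines --- but with that correction the argument is word-for-word dual to~(a).
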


\subsection{Generalized incidences and induction on scales}
In this section, we start with the basic Fourier analysis in the $p$-adic setting. Then we define formally the incidence for sets of weighted points and lines and prove an intermediate result which lays the foundation for the inductive step in the proof of Theorem \ref{weighted}. Finally, we generalize the dimensional spacing conditions to sets of weighted points and lines and prove Theorem \ref{weighted}.
\subsubsection{Basic Fourier analysis}
We consider the following Fourier transform for functions on $R_k^2$: Let $f: R_k^2 \rightarrow \CC$ be a function. The Fourier transform of $f$, denoted by $\widehat{f}$, is given by 
\begin{equation*}
    \widehat{f}(\xi) = p^{-k}\sum\limits_{x\in R_k^2} f(x)e^{-\frac{2\pi i \langle \xi, x\rangle}{p^k}}.
\end{equation*}
Using this normalized Fourier transform, we have the following Plancherel's and Parseval's identities:
\begin{align*}
    \sum\limits_{x\in R_k^2} f(x)\overline{g(x)}  = \sum\limits_{\xi \in R_k^2} \widehat{f}(\xi) \overline{\widehat{g}(\xi)};\quad 
    \sum\limits_{x\in R_k^2} |f(x)|^2   = \sum\limits_{\xi \in R_k^2} |\hat{f}(\xi)|^2.
\end{align*}
The convolution property now becomes 
\begin{align*}
    \widehat{\varphi * \psi}(\xi) & = p^{-k}\sum\limits_{x\in R_k^2} (\varphi * \psi)(x)e^{-\frac{2\pi i \langle \xi,x\rangle}{p^k}}\\
    & = p^{-k} \sum\limits_{x\in R_k^2} \sum\limits_{y\in R_k^2} \varphi(x-y)\psi(y) e^{-\frac{2\pi i \langle \xi,y\rangle}{p^k}}\cdot e^{-\frac{2\pi i \langle \xi,x-y\rangle}{p^k}}\\
    & = p^{-k} \sum\limits_{y\in R_k^2} \psi(y)e^{-\frac{2\pi i \langle \xi,y\rangle}{p^k}} \sum\limits_{x\in R_k^2}  \varphi(x-y)e^{-\frac{2\pi i \langle \xi,x-y\rangle}{p^k}}\\
    & = p^k \widehat{\varphi}(\xi) \widehat{\psi}(\xi).
\end{align*}

If we define $\chi_x(\xi) = e^{-\frac{2\pi i \langle \xi,x\rangle}{p^k}}$ for each $x\in R_k^2$, and let $M$ be a $R_k$-submodule of $R_k^2$, then we have 
\begin{align*}
    \widehat{\mathbf{1}_M}(\xi) = C\mathbf{1}_{M^{\sharp}}(\xi),
\end{align*}
where $C$ is some constant, and $M^{\sharp}$ is the annihilator of $M$ in $R_k^2$, defined as 
\begin{equation*}
    M^{\sharp} = \{x\in R_k^2: \forall\ \xi \in M,\ \chi_x(\xi) = 1\}.
\end{equation*}

The orthogonal to $M$ is defined by 
\begin{equation*}
    M^{\perp} = \{y\in R_k^2: \forall\ x\in M, \langle x,y\rangle = 0\}.
\end{equation*}
The orthogonal to $M$ and annihilator of $M$ coincide provided certain conditions are met:
\begin{lemma}
    Let $Q$ be a commutative ring possessing a generating character and $A$ a $Q$-submodule of $Q^d$. Then $A^{\sharp} = A^{\perp}$.
\end{lemma}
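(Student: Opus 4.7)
The plan is to verify both inclusions $A^{\perp}\subseteq A^{\sharp}$ and $A^{\sharp}\subseteq A^{\perp}$ separately. Write $\chi$ for the generating character of $Q$, so that for $x\in Q^d$ one has $\chi_x(\xi)=\chi(\langle \xi,x\rangle)$. The inclusion $A^{\perp}\subseteq A^{\sharp}$ is immediate: if $x\in A^{\perp}$, then $\langle \xi,x\rangle=0$ for every $\xi\in A$, and hence $\chi_x(\xi)=\chi(0)=1$. No hypothesis on $\chi$ is used here.

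The content is in the reverse inclusion, and the key is to turn the multiplicative condition $\chi_x\equiv 1$ on $A$ into the additive condition $\langle\cdot,x\rangle\equiv 0$ on $A$ via the defining property of a generating character, namely that the only ideal $J\subseteq Q$ with $\chi(J)=\{1\}$ is $J=0$. Fix $x\in A^{\sharp}$ and consider
\begin{equation*}
    J_x := \{\langle \xi,x\rangle : \xi\in A\}\subseteq Q.
\end{equation*}
I will verify that $J_x$ is an ideal of $Q$: it is closed under addition because the inner product is additive in the first slot and $A$ is a subgroup, and it is closed under $Q$-scalar multiplication because $A$ is a $Q$-submodule of $Q^d$ and the inner product is $Q$-bilinear, so $q\langle\xi,x\rangle=\langle q\xi,x\rangle\in J_x$ for all $q\in Q$ and $\xi\in A$.

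By the hypothesis $x\in A^{\sharp}$, we have $\chi(t)=1$ for every $t\in J_x$. The generating property of $\chi$ then forces $J_x=\{0\}$, which by definition means $\langle \xi,x\rangle=0$ for all $\xi\in A$; hence $x\in A^{\perp}$. The only non-routine step is the $Q$-module closure argument that makes $J_x$ an ideal rather than merely a subgroup, since it is precisely this structure that allows the generating character hypothesis to be invoked; I expect this to be where the commutativity of $Q$ and the submodule hypothesis on $A$ do essential work, and everything else reduces to unwinding definitions.
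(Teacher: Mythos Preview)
Your argument is correct and is the standard proof of this fact. Note that the paper itself states this lemma without proof (it is a well-known result in the theory of rings with generating characters, sometimes attributed to Wood), so there is no ``paper's own proof'' to compare against. Your two-inclusion approach---with the nontrivial direction handled by showing that $J_x=\{\langle\xi,x\rangle:\xi\in A\}$ is an ideal on which $\chi$ is trivial, forcing $J_x=0$---is exactly the expected argument and uses the submodule hypothesis on $A$ in precisely the right place.
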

In particular, for $Q = R_k = \ZZ/p^k\ZZ$, we have $M^{\sharp} = M^{\perp}$.

\subsubsection{Incidences for weighted points and lines}
From basic $p$-adic geometry, in $R_k^2$ a point is a $p^{-k}$-cube and a line is a $p^{-k}$-tube. We observe that for any $1\leq j \leq k-1$, each $p^{-j}$-cube in $R_k^2$ can be identified with a point in $R_j^2$, and each $p^{-j}$-tube can be identified with a line in $R_j^2$. As a result, the incidence between $p^{-j}$-cubes and $p^{-j}$-tubes in $R_k^2$ is equivalent to the incidence between points and lines in $R_j^2$. These identifications enable us to consider multiple scales for the point-line incidence problem and carry out the induction-on-scales argument derived from the Euclidean setting.\\
Furthermore, we notice that in $p$-adics, the concepts of "cube" and "ball" are identical. A consequence of the equivalence of cubes and balls is that every point in the same cube will be thickened to be exactly that cube, so that we always end up with duplicated cubes and tubes after thickening. To deal with the duplication, we now extend the definition of incidence to sets of weighted points and lines, which is an analogue of the set-up in \cite{Bradshaw23} in the $p$-adic setting.

\begin{definition}
    Let $\CP$ be a set of weighted points in $R_k^2$ with weight function $w: \CP \mapsto \NN$ and $\CL$ a set of weighted lines in $R_k^2$ with weight function $\omega: \CL \mapsto \NN$. We define the generalized incidence counting function
    \begin{equation*}
        I_w(\CP,\CL) \coloneqq \sum\limits_{q\in \CP} \sum\limits_{l \in \CL} w(q)\omega(l) \mathbf{1}_{\{q\in l\}}
    \end{equation*}
\end{definition}
\begin{remark}
    We see that 
    \begin{equation*}
        I_w(\CP,\CL) \leq \left(\sum\limits_{q\in \CP} w(q)\right)\left(\sum\limits_{l\in \CL} \omega(l)\right) \eqqcolon |\CP|_w|\CL|_w.
    \end{equation*}
\end{remark}

The general principle of weighted points and lines is as follows. After thickening, some points may become identical cubes, in which case we consider them a single cube (alternatively, a single point at some lower scale) with an associated weight. If the original points are already weighted, then their weights sum if they become the same cube after thickening. Similar rules apply to tubes thickened from lines.\\
Now we prove our first result, which will be the foundation for our inductive argument. The proof is based on the standard high-low method inspired by \cite{GSW19}. We refer to \cite{FGR22}, \cite{Bradshaw23}, \cite{FR24}, \cite{OS23}, and \cite{RW23} for more applications of the high-low method to tube-ball incidences as well as the Furstenberg set problem in the Euclidean setting.

\begin{proposition}\label{highlowlemma}
    Let $\CP$ be a set of distinct weighted points of $R_k^2$ with weight function $w$, and let $\CL$ be a set of distinct (unweighted) lines in $R_k^2$. Let $1\leq j \leq k-1$. Then
    \begin{equation*}
        I_w(\CP,\CL) \leq p^{\frac{k+j-1}{2}}|\CL|^{\frac{1}{2}}\left(\sum\limits_{q\in \CP} w(q)^2\right)^{\frac{1}{2}} + p^{-j}I_w(\CP_{k-j},\CL_{k-j}),
    \end{equation*}
    where $\CP_{k-j}$ and $\CL_{k-j}$ are, respectively, the sets of weighted $p^{-(k-j)}$-cubes and $p^{-(k-j)}$-tubes formed by thickening $\CP$ and $\CL$ by a factor of $p^j$.
\end{proposition}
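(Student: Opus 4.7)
The plan is to implement the Fourier-analytic high-low decomposition directly in the $p$-adic setting. Encode the data by the functions $f(x) = \sum_{q \in \CP} w(q)\mathbf{1}_{\{q\}}(x)$ and $g(x) = \sum_{l \in \CL} \mathbf{1}_l(x)$, so that $I_w(\CP,\CL) = \sum_x f(x)g(x)$. Let $\eta = p^{-2j}\mathbf{1}_{B_j}$, where $B_j = \{x \in R_k^2: p^{k-j}\mid x\}$ is the $p^{-(k-j)}$-cube at the origin; convolution with $\eta$ averages a function over the $p^{-(k-j)}$-cube containing each point, which is exactly the effect of thickening by a factor of $p^j$. Since the annihilator of $B_j$ is $p^j R_k^2$, the Fourier-transform formula for the indicator of a submodule gives $\widehat{\eta} = p^{-k}\mathbf{1}_{p^j R_k^2}$, so the low-pass functions $f_{\mathrm{low}} = f*\eta$ and $g_{\mathrm{low}} = g*\eta$ have Fourier transforms supported in $p^j R_k^2$, while $f_{\mathrm{high}} = f - f_{\mathrm{low}}$ and $g_{\mathrm{high}} = g - g_{\mathrm{low}}$ are supported on its complement. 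Plancherel then produces the orthogonal decomposition
\[
    I_w(\CP,\CL) = \sum_x f_{\mathrm{low}}(x)g_{\mathrm{low}}(x) + \sum_x f_{\mathrm{high}}(x)g_{\mathrm{high}}(x).
\]

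For the low-frequency piece I would compute the two factors directly. Using that $|l \cap Q| = p^j$ whenever a $p^{-(k-j)}$-cube $Q$ is contained in the thickened tube $T_l$ (and $0$ otherwise), one finds $f_{\mathrm{low}}(x) = p^{-2j}w(Q_j(x))$ and $g_{\mathrm{low}}(x) = p^{-j}\sum_{T \supset Q_j(x)}\omega(T)$, where $Q_j(x)$ is the $p^{-(k-j)}$-cube containing $x$. Summing over $x$ (so that each cube contributes $p^{2j}$ identical terms) and identifying $p^{-(k-j)}$-cubes and tubes in $R_k^2$ with points and lines in $R_{k-j}^2$ gives $\sum_x f_{\mathrm{low}}g_{\mathrm{low}} = p^{-j}I_w(\CP_{k-j},\CL_{k-j})$, which is exactly the second summand of the proposition.

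The high-frequency piece is estimated via Cauchy-Schwarz and Plancherel,
\[
    \Bigl|\sum_x f_{\mathrm{high}}(x)g_{\mathrm{high}}(x)\Bigr| \le \|f\|_2 \|g_{\mathrm{high}}\|_2 = \Bigl(\sum_{q\in\CP}w(q)^2\Bigr)^{1/2}\|g_{\mathrm{high}}\|_2,
\]
so the proof reduces to the Fourier estimate $\|g_{\mathrm{high}}\|_2^2 \le p^{k+j-1}|\CL|$. This is the step I expect to be the main obstacle: without a separation assumption on $\CL$, the naive Plancherel bound $\|g\|_2^2 \le |\CL|^2 p^k$ is useless, and one must exploit both the concentrated Fourier support of individual lines and the cancellation coming from subtracting low frequencies.

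I would prove this Fourier estimate by grouping $\CL$ according to direction. For each direction $b$, set $\CL_b = \{l\in\CL: b_l = b\}$ and $g_b = \sum_{l\in\CL_b}\mathbf{1}_l$; then $\widehat{g_b}$ is supported on the rank-one submodule $b^\perp = \{\xi: \langle\xi, b\rangle \equiv 0 \pmod{p^k}\}$. Because distinct parallel lines in $R_k^2$ are disjoint, Plancherel yields $\sum_\xi |\widehat{g_b}(\xi)|^2 = \|g_b\|_2^2 = |\CL_b|p^k$. The key combinatorial input is that for each nonzero $\xi$ with $v_p(\xi) = v$, the set $D(\xi) = \{b: \xi\in b^\perp\}$ of admissible directions has cardinality exactly $p^v$; this follows from $p$-adic linear algebra applied to the equation $\langle\xi, b\rangle\equiv 0\pmod{p^k}$. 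In particular, whenever $p^j\nmid \xi$ we have $|D(\xi)| \le p^{j-1}$. Applying Cauchy-Schwarz to the identity $\widehat{g}(\xi) = \sum_{b\in D(\xi)}\widehat{g_b}(\xi)$ gives $|\widehat{g}(\xi)|^2 \le p^{v_p(\xi)}\sum_b |\widehat{g_b}(\xi)|^2$, and switching the order of summation produces
\[
    \sum_{\xi:\, p^j \nmid \xi} |\widehat{g}(\xi)|^2 \le p^{j-1}\sum_b \sum_\xi |\widehat{g_b}(\xi)|^2 = p^{j-1}\sum_b |\CL_b|p^k = p^{k+j-1}|\CL|,
\]
which is the bound we need. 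Combining this with the Cauchy-Schwarz reduction and the low-frequency identity completes the argument.
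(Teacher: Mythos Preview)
Your proposal is correct and follows essentially the same high--low argument as the paper: the low-frequency piece is identified with $p^{-j}I_w(\CP_{k-j},\CL_{k-j})$ by the same convolution computation, and the high-frequency piece is bounded by the same direction-grouping argument (at most $p^{j-1}$ directions contribute at any $\xi$ with $p^j\nmid\xi$, then Cauchy--Schwarz over directions and Plancherel using disjointness of parallel lines). The only cosmetic difference is that you introduce the cutoff as a physical-space mollifier $\eta=p^{-2j}\mathbf{1}_{B_j}$ and use the orthogonal splitting $\sum fg=\sum f_{\mathrm{low}}g_{\mathrm{low}}+\sum f_{\mathrm{high}}g_{\mathrm{high}}$, whereas the paper multiplies by the indicator of $Q_j^{(0)}$ directly on the Fourier side; these are the same decomposition.
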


\begin{proof}
    Let $f(x) = \sum\limits_{q\in \CP} w(q)\mathbf{1}_{\{q\}}(x)$, and $g(x) = \sum\limits_{l\in \CL} \mathbf{1}_{l}(x)$. Then by definition and Plancherel,  
    \begin{equation*}
        I_w(\CP,\CL) = \sum\limits_{x\in R_k^2} f(x)g(x) = \sum\limits_{\xi\in R_k^2} \hat{f}(\xi)\overline{\hat{g}(\xi)}.
    \end{equation*}
    Let $\eta = \mathbf{1}_{Q_j^{(0)}}$, where $Q_j^{(0)}$ is the $p^{-j}$-cube in $R_k^2$ containing $(0,0)$. Then we can decompose the sum into \textit{high frequency term} and \textit{low frequency term} as follows.
    \begin{align*}
        I_w(\CP,\CL) = \sum\limits_{\xi\in R_k^2} \hat{f}(\xi)\overline{\hat{g}(\xi)}\eta(\xi) + \sum\limits_{\xi\in R_k^2} \hat{f}(\xi)\overline{\hat{g}(\xi)}(1-\eta(\xi)) \eqqcolon L + H.
    \end{align*}
    \textit{High frequency case}. We see that by Cauchy-Schwarz inequality, 
    \begin{align*}
        H & = \sum\limits_{\xi\in R_k^2} \hat{f}(\xi)\overline{\hat{g}(\xi)}(1-\eta(\xi))\\
        & \leq \left(\sum\limits_{\xi\in R_k^2} |\hat{f}(\xi)|^2\right)^{\frac{1}{2}}\left(\sum\limits_{\xi\in R_k^2} |\hat{g}(\xi)|^2(1-\eta(\xi))^2\right)^{\frac{1}{2}}.
    \end{align*}
    Note that by Parseval's identity, 
    \begin{equation*}
        \sum\limits_{\xi\in R_k^2} |\hat{f}(\xi)|^2 = \sum\limits_{x\in R_k^2} |f(x)|^2 = \sum\limits_{q\in \CP} w(q)^2.
    \end{equation*}
    It is easy to see that for every line $l$ in $R_k^2$, $\widehat{\mathbf{1}_{l}}$ is supported on the dual line $l'$ through $(0,0)$ perpendicular to $l$. Let $\xi \in R_k^2\setminus Q_j^{(0)}$. Then $\|\xi - 0\| \geq p^{-(j-1)}$, and so there is at most $p^{j-1}$ \textit{different directions} for lines passing through both $0$ and $\xi$. Also, parallel lines are disjoint and correspond to the same dual line. So 
    \begin{align*}
        \sum\limits_{\xi\in R_k^2} |\hat{g}(\xi)|^2(1-\eta(\xi))^2 & = \sum\limits_{\xi\in R_k^2} (1-\eta(\xi))^2\left |\sum\limits_{l\in \CL} \widehat{\mathbf{1}_{l}}(\xi)\right|^2\\
        & \leq p^{j-1}\sum\limits_{\xi\in R_k^2} (1-\eta(\xi))^2 \sum\limits_{b\in \PP R_k^1} \left|\sum\limits_{l\parallel b}\widehat{\mathbf{1}_{l}}(\xi)\right|^2\\
        & \leq p^{j-1}\sum\limits_{b\in \PP R_k^2} \sum\limits_{\xi \in R_k^2} \left|\sum\limits_{l\parallel b}\widehat{\mathbf{1}_{l}}(\xi)\right|^2\\
        & = p^{j-1}\sum\limits_{b\in \PP R_k^2} \sum\limits_{x\in R_k^2} \left|\sum\limits_{l\parallel b}\mathbf{1}_{l}(x)\right|^2\\
        & \leq p^{j-1} \sum\limits_{b\in \PP R_k^2} p^k|\{l\in \CL: l\parallel b\}|\\
        & \leq p^{k+j-1}|\CL|.
    \end{align*}
    It follows that 
    \begin{equation*}
        H \leq \left(\sum\limits_{q\in \CP} w(q)^2\right)^{\frac{1}{2}} p^{\frac{k+j-1}{2}}|\CL|^{\frac{1}{2}}.
    \end{equation*}
    \textit{Low frequency case}. Note that $\eta = \overline{\eta} = |\eta|^2$. So we have 
    \begin{align*}
        L & = \sum\limits_{\xi \in R_k^2} \hat{f}(\xi)\overline{\hat{g}(\xi)}|\eta|^2(\xi) = \sum\limits_{\xi \in R_k^2} \hat{f}(\xi)\eta(\xi)\overline{\hat{g}(\xi)\eta(\xi)} = p^{-2k} \sum\limits_{x \in R_k^2} (f * h)(x)(g * h)(x),
    \end{align*}
    where $h$ satisfies that $\hat{h} = \eta$. Since each $Q_j^{(0)}$ is an $R_k$-submodule of $R_k^2$, the discussion in the previous part suggests that if $\hat{h} = \eta$, then $h = C\mathbf{1}_{Q_{k-j}^{(0)}}$, where $Q_{k-j}^{(0)}$ is the $p^{-(k-j)}$-cube in $R_k^2$ containing $(0,0)$. We can explicitly calculate that 
    \begin{equation*}
        h = p^{k-2j}\mathbf{1}_{Q_{k-j}^{(0)}}.
    \end{equation*}
    We now study the effect of convolution with $h$. For every $x\in R_k^2$,
    \begin{align*}
        (f*h)(x) & = \sum\limits_{y\in R_k^2} f(y)h(x-y) = p^{k-2j}\sum\limits_{y\in R_k^2} \sum\limits_{q\in \CP} w(q)\mathbf{1}_{\{q\}}(y) \mathbf{1}_{Q_{k-j}^{(0)}}(x-y)\\
        & = p^{k-2j}\sum\limits_{q\in \CP} w(q) \sum\limits_{y\in R_k^2}\mathbf{1}_{\{q\}}(y) \mathbf{1}_{Q_{k-j}^{(x)}}(y) = p^{k-2j}\sum\limits_{q\in \CP\cap Q_{k-j}^{(x)}} w(q),
    \end{align*}
    and
    \begin{align*}
        (g*h)(x) & = \sum\limits_{y\in R_k^2} g(y)h(x-y) = p^{k-2j}\sum\limits_{y\in R_k^2} \sum\limits_{l\in \CL} \mathbf{1}_{l}(y)\mathbf{1}_{Q_{k-j}^{(0)}}(x-y)\\
        & = p^{k-2j} \sum\limits_{l\in \CL} \sum\limits_{y\in R_k^2} \mathbf{1}_{l}(y)\mathbf{1}_{Q_{k-j}^{(x)}}(y) = p^{k-2j}\sum\limits_{l\in \CL} |l\cap Q_{k-j}^{(x)}|,
    \end{align*}
    where $Q_{k-j}^{(x)}$ is the $p^{-(k-j)}$-cube in $R_k^2$ containing $x$. Thus, we have 
    \begin{align*}
        L & = p^{-2k}\cdot p^{2(k-2j)} \sum_{x\in R_k^2} \left(\sum_{q\in \CP\cap Q_{k-j}^{(x)}} w(q) \right) \left(\sum_{l\in \CL} |l\cap Q_{k-j}^{(x)}|\right)\\
        & = p^{-4j} \sum_{x\in R_k^2} \sum_{q\in \CP\cap Q_{k-j}^{(x)}} w(q) \sum_{l: l\cap Q_{k-j}^{(x)}\neq \emptyset} |l\cap Q_{k-j}^{(x)}|\\
        & = p^{-4j}\cdot p^j \sum_{x\in R_k^2} \left(\sum_{q\in \CP\cap Q_{k-j}^{(x)}} w(q)\right) |\{l\in \CL: l\cap Q_{k-j}^{(x)} \neq \emptyset\}|\\
        & = p^{-3j}\cdot p^{2j}\sum_{Q_{k-j}} \left(\sum_{q\in \CP\cap Q_{k-j}} w(q)\right) |\{l\in \CL: T_{k-j}^{(l)}\cap Q_{k-j} \neq \emptyset\}|\\
        & = p^{-j}\sum_{Q_{k-j}\in \CP_{k-j}} \left(\sum_{q\in \CP\cap Q_{k-j}} w(q)\right)\left( \sum_{T_{k-j}\in \CL_{k-j}} |\{l\in \CL: l\subseteq T_{k-j},\ T_{k-j}\cap Q_{k-j} \neq \emptyset\}| \right)\\
        & = p^{-j}I_w(\CP_{k-j},\CL_{k-j}),
    \end{align*}
    where $T_{k-j}^{(l)}$ is the $p^{-(k-j)}$-tube containing $l$. Here we use the fact that there are $p^{2j}$ points in $R_k^2$ determining the same $p^{-(k-j)}$-cube and the general principle introduced above. Therefore, we conclude that 
    \begin{align*}
        I_w(\CP,\CL) \leq p^{\frac{k+j-1}{2}}|\CL|^{\frac{1}{2}}\left(\sum\limits_{q\in \CP} w(q)^2\right)^{\frac{1}{2}} + p^{-j}I_w(\CP_{k-j},\CL_{k-j}).
    \end{align*}
\end{proof}

\subsubsection{Generalized dimensional conditions and Proof of Theorem 1.3}
We now generalize the dimensional spacing conditions to sets of weighted cubes and tubes.
\begin{definition}
    We say that a family $\CQ$ of weighted $p^{-j}$-cubes in $R_k^2$, with weight function $w$, is a $(p^{-j},\alpha,K_{\alpha})$-set if for every $0\leq \ell \leq j$ and for every $p^{-\ell}$-cube $Q_{\ell}$, we have 
    \begin{equation*}
        \sum\limits_{\substack{q\in \CQ\\ q\subseteq Q_{\ell}}} w(q) \leq K_{\alpha}p^{\alpha(j-\ell)}.
    \end{equation*}
    In particular, when $\ell = j$, the condition implies that $w(q) \leq K_{\alpha}$ for every $q\in \CQ$.\\
    Similarly, we say that a family $\CT$ of weighted $p^{-j}$-tubes in $R_k^2$, with weight function $\omega$, is a $(p^{-j},\beta,K_{\beta})$-set if for every $0\leq \ell \leq j$ and for every $p^{-\ell}$-tube $T_{\ell}$, we have 
    \begin{equation*}
        \sum\limits_{\substack{t\in \CT \\ t\subseteq T_{\ell}}} \omega(t) \leq K_{\beta}p^{\beta(j-\ell)}.
    \end{equation*}
\end{definition}
After extending the conditions to weighted sets, the behavior under thickening becomes more consistent compared to when we only restrict to unweighted sets. Namely, thickening a $(p^{-m},\alpha,K_{\alpha})$-set of cubes or tubes by a factor of $p^r$ for some $1\leq r \leq m-1$ yields a $(p^{-(m-r)},\alpha,p^{\alpha r} K_{\alpha})$-set of cubes or tubes.\\
For a weighted set $\CA$ with weight function $w$, define 
\begin{equation*}
    |\CA|_w = \sum\limits_{a\in \CA} w(a).
\end{equation*}
When $\CA$ is an unweighted set, i.e. $w(a) \equiv 1$ for all $a\in \CA$, we have $|\CA|_w = |\CA|$, the cardinality of $\CA$.

\begin{proposition}\label{inductlemma}
    Let $\CP$ be a set of weighted points of $R_k^2$ with weight function $w$, and let $\CL$ be a set of weighted lines in $R_k^2$, with weight function $\omega$. Suppose that $\max_{q\in \CP} w(q) \leq K_{\alpha}$ and $\max_{l\in \CL} \omega(l) \leq K_{\beta}$ for some positive constants $K_{\alpha}$ and $K_{\beta}$. Let $1\leq j \leq k-1$. Then
    \begin{equation*}
        I_w(\CP,\CL) \leq p^{\frac{k+j-1}{2}}(K_{\alpha}K_{\beta})^{\frac{1}{2}}|\CP|_w^{\frac{1}{2}}|\CL|_w^{\frac{1}{2}} + p^{-j}I_w(\CP_{k-j},\CL_{k-j}),
    \end{equation*}
    where $\CP_{k-j}$ and $\CL_{k-j}$ are, respectively, the weighted sets of $p^{-(k-j)}$-cubes and $p^{-(k-j)}$-tubes formed by thickening $\CP$ and $\CL$ by a factor of $p^j$.
\end{proposition}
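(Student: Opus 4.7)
The plan is to extend the high-low Fourier decomposition in the proof of Proposition \ref{highlowlemma} verbatim, upgrading it to the setting where lines also carry weights. Set
\[
f(x) = \sum_{q\in\CP} w(q)\mathbf{1}_{\{q\}}(x), \qquad g(x) = \sum_{l\in\CL} \omega(l)\mathbf{1}_{l}(x),
\]
so that $I_w(\CP,\CL) = \sum_{x\in R_k^2} f(x)g(x) = \sum_{\xi\in R_k^2}\hat f(\xi)\overline{\hat g(\xi)}$ by Plancherel. With $\eta = \mathbf{1}_{Q_j^{(0)}}$ as before, split this into a high-frequency part $H$ (coefficient $1-\eta$) and a low-frequency part $L$ (coefficient $\eta$); the target inequality will follow from bounding $H$ and $L$ separately.

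For the high-frequency part I would run Cauchy-Schwarz and Parseval just as in Proposition \ref{highlowlemma}. The new ingredient on the $f$ side is to combine Parseval with the pointwise bound $w(q)\le K_\alpha$ to obtain $\sum_\xi|\hat f(\xi)|^2 = \sum_q w(q)^2 \le K_\alpha |\CP|_w$. On the $g$ side, I would use that $\widehat{\mathbf{1}_l}$ is supported on the dual line through the origin perpendicular to $l$, that parallel lines are disjoint, and that for $\xi\notin Q_j^{(0)}$ at most $p^{j-1}$ directions $b\in\PP R_k^1$ have a dual line through $\xi$. Cauchy-Schwarz over directions, followed by Parseval applied to each parallel family, yields
\[
\sum_\xi |\hat g(\xi)|^2 (1-\eta(\xi))^2 \le p^{j-1}\sum_{b\in \PP R_k^1}\sum_{\xi\in R_k^2}\Bigl|\sum_{l\parallel b}\omega(l)\widehat{\mathbf{1}_l}(\xi)\Bigr|^2 \le p^{k+j-1}\sum_{l\in\CL}\omega(l)^2 \le p^{k+j-1}K_\beta|\CL|_w,
\]
where the last inequality uses $\omega(l)\le K_\beta$. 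Combining these two bounds with Cauchy-Schwarz yields $H\le p^{(k+j-1)/2}(K_\alpha K_\beta)^{1/2}|\CP|_w^{1/2}|\CL|_w^{1/2}$, which is the first term in the claimed inequality.

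For the low-frequency part, I would repeat the computation from Proposition \ref{highlowlemma} verbatim with $g$ now weighted. One checks $\hat h = \eta$ for $h = p^{k-2j}\mathbf{1}_{Q_{k-j}^{(0)}}$ and obtains
\[
(f*h)(x) = p^{k-2j}\sum_{q\in \CP\cap Q_{k-j}^{(x)}} w(q), \qquad (g*h)(x) = p^{k-2j}\sum_{l\in\CL}\omega(l)\,|l\cap Q_{k-j}^{(x)}|.
\]
Using $|l\cap Q_{k-j}^{(x)}|=p^{j}$ whenever non-empty, grouping points of $R_k^2$ by the unique $p^{-(k-j)}$-cube containing them (each cube is counted $p^{2j}$ times in the sum over $x$), and identifying $l$ with the unique $p^{-(k-j)}$-tube $T_{k-j}^{(l)}$ containing it under the rule $\omega(T_{k-j}) := \sum_{l\subseteq T_{k-j}}\omega(l)$, the resulting expression collapses to $L = p^{-j}I_w(\CP_{k-j},\CL_{k-j})$. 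Adding $H$ and $L$ gives the claim.

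The only genuinely new ingredient compared to Proposition \ref{highlowlemma} is the handling of $\sum_l \omega(l)\widehat{\mathbf{1}_l}(\xi)$ in the high-frequency step, and the main subtle point to verify is that the $p^{j-1}$ factor — which in the unweighted proof came from counting dual directions through $\xi$ — survives a weighted Cauchy-Schwarz in the form $|\sum_l \omega(l)\widehat{\mathbf{1}_l}(\xi)|^2 \le p^{j-1}\sum_b|\sum_{l\parallel b}\omega(l)\widehat{\mathbf{1}_l}(\xi)|^2$. Once this is set up, converting $\sum_l \omega(l)^2$ into $K_\beta|\CL|_w$ via the uniform weight bound is immediate, and the rest is careful bookkeeping of powers of $p$.
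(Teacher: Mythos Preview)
Your proof is correct, but it takes a genuinely different route from the paper. The paper does \emph{not} redo the Fourier high-low argument with weighted lines; instead it reduces to Proposition~\ref{highlowlemma} by a partitioning trick. Since the weights are integer-valued, one can split $\CP$ into $M_1\le K_\alpha$ unweighted multisets $\CP^1,\dots,\CP^{M_1}$ (so that each $q$ appears in exactly $w(q)$ of them) and similarly split $\CL$ into $M_2\le K_\beta$ unweighted families. Proposition~\ref{highlowlemma} is applied to each pair $(\CP^i,\CL^r)$, and the results are summed; Cauchy--Schwarz over the indices $i,r$ converts $\sum_i|\CP^i|^{1/2}\sum_r|\CL^r|^{1/2}$ into $M_1^{1/2}M_2^{1/2}|\CP|_w^{1/2}|\CL|_w^{1/2}\le (K_\alpha K_\beta)^{1/2}|\CP|_w^{1/2}|\CL|_w^{1/2}$, while the low-frequency terms reassemble exactly into $p^{-j}I_w(\CP_{k-j},\CL_{k-j})$.

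Your approach is arguably cleaner: it avoids the partition bookkeeping, handles both weight functions symmetrically inside the Fourier computation, and would extend to real-valued (not just integer) weights. The paper's approach has the virtue of treating Proposition~\ref{highlowlemma} as a black box, so no part of the Fourier analysis needs to be reopened. Both yield the same bound with the same constants.
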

\begin{proof}
    We partition $\CP$ into $M_1$ unweighted groups $\CP^1,\dots, \CP^{M_1}$ and partition $\CL$ into $M_2$ unweighted groups $\CL^1,\dots, \CL^{M_2}$, with $M_1\leq K_{\alpha}$ and $M_2\leq K_{\beta}$. For each pair of unweighted sets $(\CP^i,\CL^r)$ of points and lines, applying Proposition \ref{highlowlemma} with the weight function $w \equiv 1$ we obtain 
    \begin{equation*}
        I(\CP^i,\CL^r) \leq p^{\frac{k+j-1}{2}}|\CP^i|^{\frac{1}{2}}|\CL^r|^{\frac{1}{2}} + p^{-j}I_w(\CP_{k-j}^i,\CL_{k-j}^r).
    \end{equation*}
    We observe that $\CP_{k-j} = \bigcup_{i=1}^{M_1} \CP_{k-j}^i$ and $\CL_{k-j} = \bigcup_{r=1}^{M_2} \CL_{k-j}^r$ as sets. The general principle indicates that 
    \begin{align*}
        & w(q) = \sum_{i: q\in \CP_{k-j}^i} w_i(q) \quad \text{for every } q\in \CP_{k-j},\\
        & \omega(t) = \sum_{r: t\in \CL_{k-j}^r} \omega_r(t) \quad \text{for every } t\in \CL_{k-j}.
    \end{align*}
    It follows that 
    \begin{align*}
        \sum_{i=1}^{M_1} \sum_{r=1}^{M_2} I_w(\CP_{k-j}^i,\CL_{k-j}^r) & = \sum_{i=1}^{M_1} \sum_{r=1}^{M_2} \sum_{q\in \CP_{k-j}^i} \sum_{t\in \CL_{k-j}^r} w_i(q)\omega_r(t) \mathbf{1}_{\{q\cap t \neq \emptyset\}}\\
        & = \sum_{q\in \CP_{k-j}} \sum\limits_{t\in \CL_{k-j}} \sum\limits_{i: q\in \CP_{k-j}^i} \sum\limits_{r: t\in \CL_{k-j}^r} w_i(q)\omega_r(t) \mathbf{1}_{\{q\cap t \neq \emptyset\}}\\
        & = \sum\limits_{q\in \CP_{k-j}}\sum\limits_{t\in \CL_{k-j}} \mathbf{1}_{\{q\cap t \neq \emptyset\}} \left(\sum\limits_{i: q\in \CP_{k-j}^i} w_i(q)\right) \left(\sum\limits_{r: t\in \CL_{k-j}^r}\omega_r(t)\right)\\
        & = \sum\limits_{q\in \CP_{k-j}}\sum\limits_{t\in \CL_{k-j}} w(q)\omega(t) \mathbf{1}_{\{q\cap t \neq \emptyset\}} = I_w(\CP_{k-j},\CL_{k-j}).
    \end{align*}
    Combining the contributions of incidences from each pair of unweighted sets gives that
    \begin{align*}
        \sum_{i=1}^{M_1} \sum_{r=1}^{M_2} I(\CP^i,\CL^r) = \sum_{q'\in \CP}|\{i: q'\in\CP^i\}| \sum_{l\in \CL} |\{r: l\in \CL^r\}| \mathbf{1}_{\{q'\cap l \neq \emptyset\}} = \sum_{q'\in \CP} w(q) \sum_{l\in \CL} \omega(l) \mathbf{1}_{\{q'\cap l \neq \emptyset\}}.
    \end{align*}
    So we have  
    \begin{align*}
        I_w(\CP,\CL) & = \sum_{q'\in \CP} \sum_{l\in \CL}  w(q) \omega(l) \mathbf{1}_{\{q'\cap l \neq \emptyset\}} = \sum\limits_{i=1}^{M_1} \sum\limits_{r=1}^{M_2} I(\CP^i,\CL^r)\\
        & \leq \sum\limits_{i=1}^{M_1} \sum\limits_{r=1}^{M_2}\left( p^{\frac{k+j-1}{2}}|\CP^i|^{\frac{1}{2}}|\CL^r|^{\frac{1}{2}} + p^{-j}I_w(\CP_{k-j}^i,\CL_{k-j}^r)\right)\\
        & \leq p^{\frac{k+j-1}{2}} M_1^{\frac{1}{2}}\left(\sum\limits_i |\CP^i|\right)^{\frac{1}{2}}M_2^{\frac{1}{2}}\left(\sum\limits_r |\CL^r|\right)^{\frac{1}{2}} + p^{-j}I_w(\CP_{k-j},\CL_{k-j})\\
        & \leq p^{\frac{k+j-1}{2}}(K_{\alpha}K_{\beta})^{\frac{1}{2}}|\CP|_w^{\frac{1}{2}}|\CL|_w^{\frac{1}{2}} + p^{-j}I_w(\CP_{k-j},\CL_{k-j}).
    \end{align*}
    Here we use the simple observation that 
    \begin{equation*}
        \sum_i |\CP^i| = |\CP|_w,\quad \text{and} \quad \sum_r |\CL^r| = |\CL|_w.
    \end{equation*}
\end{proof}

At this point, we are ready to prove Theorem \ref{weighted}, which is a direct extension of Theorem 1.5 in \cite{FR24}. 
\begin{proof}[Proof of Theorem \ref{weighted}]
    We prove by induction on $k$. The base case will be $k = 1$. We see that by definition, $|\CP|_w \leq K_{\alpha}p^{\alpha}$ and $|\CL|_w\leq K_{\beta}p^{\beta}$. Then 
    \begin{align*}
        I_w(\CP,\CL)& \leq |\CP|_w|\CL|_w \leq p^{c(\alpha+\beta)}(K_{\alpha}K_{\beta})^c|\CP|_w^{1-c}|\CL|_w^{1-c}\\
        & \leq C(p,\varepsilon)p^{(c+\varepsilon)}(K_{\alpha}K_{\beta})^c |\CP|_w^{1-c}|\CL|_w^{1-c}.
    \end{align*}
    For the induction step, we assume that the result holds for scales with $j \leq k-1$. 
    If $|\CP|_w|\CL|_w \leq p^kK_{\alpha}K_{\beta}$, then we have 
    \begin{align*}
        I_w(\CP,\CL)& \leq |\CP|_w|\CL|_w \leq p^{kc}(K_{\alpha}K_{\beta})^c|\CP|_w^{1-c}|\CL|_w^{1-c}\\
        & \leq C(p,\varepsilon)p^{k(c+\varepsilon)}(K_{\alpha}K_{\beta})^c|\CP|_w^{1-c}|\CL|_w^{1-c}.
    \end{align*}
    So we may now assume that $|\CP|_w|\CL|_w \geq p^kK_{\alpha}K_{\beta}$. Note that $\CP$ and $\CL$ satisfy the conditions in Proposition \ref{inductlemma} with constants $K_{\alpha}$ and $K_{\beta}$. Applying Proposition \ref{inductlemma} with $j = 1$, we obtain 
    \begin{align*}
        I_w(\CP,\CL)\leq p^{\frac{k}{2}}(K_{\alpha}K_{\beta})^{\frac{1}{2}}|\CP|_w^{\frac{1}{2}}|\CL|_w^{\frac{1}{2}} + p^{-1}I_w(\CP_{k-1},\CL_{k-1}).
    \end{align*}
    Since $|\CP|_w|\CL|_w \geq p^kK_{\alpha}K_{\beta}$, we have 
    \begin{equation*}
        p^{\frac{k}{2}}(K_{\alpha}K_{\beta})^{\frac{1}{2}}|\CP|_w^{\frac{1}{2}}|\CL|_w^{\frac{1}{2}} \leq p^{kc}(K_{\alpha}K_{\beta})^{c} |\CP|_w^{1-c}|\CL|_w^{1-c}.
    \end{equation*}
    We observe that $\CP_{k-1}$ is a $(p^{-(k-1)},\alpha,p^{\alpha}K_{\alpha})$-set of points in $R_{k-1}^2$, and $\CL_{k-1}$ is a $(p^{-(k-1)},\beta,p^{\beta}K_{\beta})$-set of lines in $R_{k-1}^2$. Notice that the total sum of weights is invariant under thickening:
    \begin{equation*}
        |\CP_{k-1}|_w = |\CP|_w \quad \text{and}\quad |\CL_{k-1}|_w = |\CL|_w.
    \end{equation*}
    Applying the induction hypothesis to $\CP_{k-1}$ and $\CL_{k-1}$ yields
    \begin{align*}
        I_w(\CP_{k-1},\CL_{k-1}) & \leq C(p,\varepsilon)p^{(k-1)(c+\varepsilon)} (K_{\alpha}K_{\beta}p^{\alpha+\beta})^c|\CP_{k-1}|_w^{1-c}|\CL_{k-1}|_w^{1-c}\\
        & \leq C(p,\varepsilon)p^{c(\alpha+\beta-1)}p^{(k-1)\varepsilon} p^{kc}(K_{\alpha}K_{\beta})^c |\CP|_w^{1-c}|\CL|_w^{1-c}.
    \end{align*}
    Thus, we have 
    \begin{align*}
        I_w(\CP,\CL) & \leq p^{kc}(K_{\alpha}K_{\beta})^{c} |\CP|_w^{1-c}|\CL|_w^{1-c} + p^{-1}\left(C(p,\varepsilon)p^{c(\alpha+\beta-1)}p^{(k-1)\varepsilon} p^{kc}(K_{\alpha}K_{\beta})^c |\CP|_w^{1-c}|\CL|_w^{1-c} \right)\\
        & \leq \left(1 + C(p,\varepsilon)p^{c(\alpha+\beta-1)-1}p^{(k-1)\varepsilon}\right)p^{kc}(K_{\alpha}K_{\beta})^c |\CP|_w^{1-c}|\CL|_w^{1-c}\\
        & \leq C(p,\varepsilon) p^{k(c+\varepsilon)}(K_{\alpha}K_{\beta})^c |\CP|_w^{1-c}|\CL|_w^{1-c},
    \end{align*}
    which completes the induction step and thus the proof.
\end{proof}

\smallskip

\section{Acknowledgements}
This work was supported by a Work Learn International Undergraduate Research Award at the University of British Columbia and by NSERC Discovery Grant 22R80520. The author is grateful to her supervisor Dr. Izabella \L aba for her guidance and support throughout the project, and for her invaluable feedback on multiple drafts of this paper. The author also thanks Paige Bright, Matthew Bull-Weizel, and Sushrut Tadwalkar for helpful conversations.

\smallskip

\bibliographystyle{amsplain}

\end{document}